\documentclass[a4paper, 10pt, reqno, english, oneside]{amsart}  % amsart or article
\usepackage[T1]{fontenc}

\usepackage[
    a4paper,
    left=3.5cm,
    right=3.5cm,
    top=4.0cm,
    bottom=4.0cm
]{geometry}

\usepackage{times}

\usepackage{color}
\usepackage[utf8]{inputenc}
\usepackage[dvipsnames]{xcolor}
\usepackage[colorlinks=true,urlcolor={YellowOrange},linkcolor={YellowOrange},citecolor={YellowOrange}]{hyperref}  
\usepackage{float}
\usepackage{amsfonts,amssymb,amsmath}
\usepackage{amsthm}    % can only be used if option "nospthms" is there
\usepackage{url}
\usepackage{paralist}
\usepackage{tikz}
    \usetikzlibrary{decorations.markings,arrows,shapes.callouts}
\usepackage{rotating} 
\usepackage{verbatim}
\usepackage{tikz-cd}
\usepackage{mathtools}
\usepackage{dsfont}
\usepackage{amsrefs}
\usepackage{thmtools}
\usepackage{thm-restate}

\newtheorem{theorem}{Theorem}[section]

\newtheorem{lemma}[theorem]{Lemma}

\newtheorem{corollary}[theorem]{Corollary} 
\newtheorem{problem}[theorem]{Problem}
\theoremstyle{definition}
\newtheorem{definition}[theorem]{Definition}
\theoremstyle{remark}
\newtheorem{example}[theorem]{Example}

\newcommand{\R}{\mathbb{R}}
\newcommand{\C}{\mathbb{C}}
\newcommand{\FF}{\mathbb{F}}

\newcommand{\Z}{\mathbb{Z}}

\newcommand{\F}{\mathcal{F}}
\newcommand{\G}{\mathcal{G}}
\newcommand{\I}{\mathcal{I}}
\newcommand{\J}{\mathcal{J}}

\newcommand{\catTop}{\text{Top}}
\newcommand{\CC}{\mathcal{C}}
\newcommand{\re}{\text{Re}}

\def\keywords{\xdef\@thefnmark{}\@footnotetext}

\DeclareMathOperator{\im}{\mathrm{im}}
\DeclareMathOperator{\sk}{\mathrm{sk}}

\DeclareMathOperator{\link}{\mathrm{lk}}

\DeclareMathOperator{\rank}{\mathrm{rk}}
\DeclareMathOperator{\conv}{\mathrm{conv}}

\DeclareMathOperator{\linspan}{\mathrm{span}}

\DeclareMathOperator{\hocolim}{\mathrm{hocolim}}

\newcommand{\stiefel}[2]{V_{#1}(\mathbb{F}^{#2})}

\tikzcdset{row sep/my_size/.initial=-2mm}
\tikzcdset{row sep/my_size_small/.initial=-3mm}
\newcommand{\xDownarrow}[1]{%
  {\left\Downarrow\vbox  to #1{}\right.\kern-\nulldelimiterspace}
}

\usepackage{tikz}
\usetikzlibrary{arrows.meta,calc}

\begin{document}

\title[On colorful generalizations of the Goodman--Pollack transversal problem]{On colorful generalizations of the Goodman--Pollack transversal problem}

%----------------------------------------------
% ALL INFORMATION ABOUT THE AUTHORS STARTS HERE

\author[Sadovek]{Nikola Sadovek} 
\address{Center for Systems Biology Dresden, Dresden, Germany \newline
Faculty of Mathematics, Technische Universit\"at Dresden, Dresden, Germany \newline
Max Planck Institute of Molecular Cell Biology and Genetics, Dresden, Germany}
\email{sadovek@mpi-cbg.de}

%\thanks{\hspace{-4mm}\textit{2020 Mathematics Subject Classification.} 52A35. \\ 
%\textit{Keywords.} Geometric transversals. Matroids. Helly-type theorems. }

% ALL INFORMATION ABOUT THE AUTHORS ENDS HERE
%--------------------------------------------

\begin{abstract}
    We establish a colorful and, more generally, matroidal solution to the problem of Goodman and Pollack on the existence of an $\FF$-affine $k$-dimensional transversal to a family of convex sets in $\mathbb{F}^d$, where $0 \le k \le d - 1$ is an integer and $\mathbb{F} \in \{\mathbb{R}, \mathbb{C}\}$.
    Our results unify several classical and recent theorems. In the case $k=0$, we recover the colorful Helly theorem of Lov\'asz, together with a matroidal extension due to Kalai and Meshulam. In the opposite extreme case $k=d-1$, we obtain Holmsen’s colorful and matroidal generalization of the Goodman--Pollack--Wenger theorem. Additionally, we extend the recent noncolorful solution of the Goodman-Pollack problem by McGinnis and the author.
    As the main application, we obtain a matroidal and colorful Dol'nikov-type transversal theorem.
    Our methods are topological. We introduce matroidal joins, defined as homotopy colimits of diagrams over the face poset of matroidal complexes, and derive estimates on their connectivity. The proof additionally relies on adaptations of nonexistence results for equivariant maps from Stiefel manifolds to spheres.
\end{abstract}

\maketitle

%\setcounter{tocdepth}{1}
%\tableofcontents

%\subjclass{\noindent\textit{2020 Mathematics Subject Classification.} \\}
%\keywords{\noindent\textit{Key words and phrases.}}

\section{Introduction}
\label{sec:intro}

\subsection*{History and context}

For integers $0 \le k < d$ and a family $\F$ of convex sets in $\R^d$ a \emph{$k$-transversal} is a $k$-dimensional affine subspace of $\R^d$ that intersects every member of $\F$. Typically, in geometric transversal theory, it is of interest to find necessary and sufficient conditions that guarantee the existence of a $k$-transversal; for extensive overviews of the subject consult the surveys of Goodman, Pollack, and Wenger~\cite{GoodmanGeometric1993} and Holmsen and Wenger~\cite{HolmsenChapter}.

A first in a long history of geometric transversal results is Helly's theorem~\cite{helly1923mengen}, which provides a necessary and sufficient condition for the existence of a $0$-transversal, that is, a point common to all sets in the family. Its \emph{colorful} generalization of Lov\'asz that first appeared in a paper of B\'ar\'any is one of the cornerstones of \mbox{discrete geometry.}

\begin{theorem}[{Lov\'asz~\cite{lovasz1974colorful} and B\'ar\'any~\cite{barany1982generalization}}] \label{thm:col-helly}
    Let $\F$ be a finite family of convex sets in $\R^d$ and let $\F = \CC_1 \cup \dots \cup \CC_{d+1}$ be a coloring into $d+1$ nonempty color classes. Suppose that for every choice $F_1 \in \CC_1, \dots, F_{d+1} \in \CC_{d+1}$ we have $F_1 \cap \dots \cap F_{d+1} \neq \emptyset$. Then, there exists a color $i$ such that $\bigcap \CC_i \neq \emptyset$.
\end{theorem}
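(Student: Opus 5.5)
The plan is the classical lexicographic-minimum argument of Lov\'asz, with Helly's theorem entering through the fact that the lexicographic minimum of an intersection of convex sets is determined by at most $d$ of them. Two reductions come first. Since the family is finite and every colorful $(d+1)$-tuple has nonempty intersection, we may replace each $F \in \F$ by the convex hull of finitely many points: pick one point in each nonempty intersection of a subfamily of $\F$ and keep in each set the chosen points it contains. This makes every set a compact polytope and changes neither the hypothesis nor the conclusion, since the convex hull of points of $\bigcap \CC_i$ lies in $\bigcap \CC_i$. After an arbitrarily small generic rotation of coordinates, every nonempty compact convex set $K$ has a unique lexicographically minimal point $\ell(K)$.

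\textbf{Key lemma.} For finitely many compact convex sets $K_1,\dots,K_m$ in $\R^d$ with $K=\bigcap_j K_j \neq \emptyset$, there is a subfamily of at most $d$ of them whose intersection has the same lexicographic minimum $\ell(K)$. To prove it, look at the open convex set $H=\{x : x <_{\mathrm{lex}} \ell(K)\}$, which is a union of open half-spaces along the lexicographic flag and is convex. The $m+1$ convex sets $K_1,\dots,K_m,H$ have empty common intersection. By Helly's theorem, some $d+1$ of them already have empty intersection. This subfamily must contain $H$, because $\bigcap_j K_j \neq \emptyset$. Hence there are at most $d$ indices $J$ with $\bigcap_{j\in J}K_j \cap H=\emptyset$. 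This says $\ell(\bigcap_J K_j) \ge_{\mathrm{lex}} \ell(K)$. Since $\bigcap_J K_j \supseteq K$, the reverse inequality also holds, so the two minima are equal.

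\textbf{Main argument.} Among all colorful $d$-tuples, i.e. choices of sets from $d$ distinct color classes, choose one, $F_1,\dots,F_d$, whose intersection $K$ has lexicographically maximal minimum $p=\ell(K)$. Its intersection is nonempty, since it is contained in a colorful $(d+1)$-intersection. Let $i$ be the color not used, and let $G \in \CC_i$ be arbitrary. Then $K\cap G \neq \emptyset$ by hypothesis. Apply the key lemma to $F_1,\dots,F_d,G$: some at most $d$ of them have the same lexicographic minimum $q=\ell(K\cap G)\ge_{\mathrm{lex}} p$. These at most $d$ sets use distinct colors, so they can be completed to a colorful $d$-tuple. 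Its intersection is contained in theirs, so its minimum is $\ge_{\mathrm{lex}} q$. Maximality of $p$ then forces $q=p$, so $p\in G$. Thus $p$ lies in every member of $\CC_i$, and $\bigcap\CC_i\neq\emptyset$.

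The step I expect to be the main obstacle is making the lexicographic minimum well behaved: its existence and uniqueness, and the convexity of $H$. The compactness reduction and the generic coordinate choice handle existence and uniqueness. The convexity of $H$ is a direct check, since the lexicographic order is compatible with convex combinations. Completing a subfamily of fewer than $d$ sets into a colorful $d$-tuple is harmless, because all color classes are nonempty and adding sets only shrinks the intersection, which raises its minimum in the lexicographic order.
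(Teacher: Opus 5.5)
Your proof is correct, and it takes a genuinely different route from the paper.

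The paper gives no standalone proof of this statement. It cites Lov\'asz and B\'ar\'any and recovers the result as the case $k=r=0$, $\FF=\R$ of Corollary~\ref{cor:colorful}: the rainbow intersection hypothesis makes the trivial map $\phi\colon\F\to\R^0$ model $d$ colorful $\R$-dependencies, by taking a common point $q_\ell=q$ and $r_\ell=1$. Its argument therefore runs through Theorem~\ref{thm:main}:
\begin{itemize}
\item the connectivity of the matroidal joins $X(v)$ from Lemma~\ref{lem:conn-matroidal-join};
\item an equivariant section over the $d$-skeleton of $V_d(\R^{d+1})$;
\item the map $\Phi$ into $\R^d\setminus\{0\}$;
\item the nonexistence result of Theorem~\ref{thm:equiv-map}.
\end{itemize}

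Your lexicographic-minimum argument is essentially Lov\'asz's original one. It is elementary and needs only Helly's theorem. Thanks to your polytope reduction it covers arbitrary convex sets, whereas Theorem~\ref{thm:main} is stated for compact sets and would need a similar reduction. What the topological machinery buys is uniformity. The same argument yields the matroidal Kalai--Meshulam version and the $k$-transversal versions for all $0\le k<d$ over $\R$ and $\C$, where a lexicographic extremal-point argument does not obviously apply.

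Three small slips do not affect validity:
\begin{itemize}
\item For $d\ge 2$ the set $H$ is not open, since it contains points of the hyperplane $x_1=p_1$. This does not matter, because Helly's theorem for a finite family needs only convexity.
\item The generic rotation is superfluous, since every nonempty compact set already has a unique lexicographic minimum.
\item A colorful $d$-intersection \emph{contains}, rather than is contained in, a colorful $(d+1)$-intersection.
\end{itemize}
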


The result, known as the \emph{colorful} Helly theorem, reduces to Helly’s original result by setting $\CC_1 = \dots = \CC_{d+1}$. The dual version of Theorem~\ref{thm:col-helly} due to B\'ar\'any~\cite{barany1982generalization}, known as the \emph{colorful Carath\'eodory theorem}, has variety of applications in discrete geometry \cites{barany1995caratheodory, sarkaria1992tverberg}.

\begin{figure}
    \centering

\begin{tikzpicture}[
    scale=0.8,
    poly/.style={draw=black, thick},
    plane/.style={draw=black},
    dashedhull/.style={dashed},
    point/.style={circle, fill=black, inner sep=1.5pt},
    >=Latex
]

% =====================================================
% LEFT: four polytopes
% =====================================================

% Top left polytope
\coordinate (A1) at (-4,0.9);
\draw[poly] ($(A1)+(-0.4,-0.3)$) %left
    -- ($(A1)+(0.3,-0.4)$)
    -- ($(A1)+(0.5,0.2)$) %right
    -- ($(A1)+(0.0,0.5)$)
    -- ($(A1)+(0.3,-0.4)$)
    -- ($(A1)+(-0.4,-0.3)$)
    -- ($(A1)+(0.0,0.5)$);
\coordinate (A3) at ($(A1)+(-0.4,-0.3)$); % left
\coordinate (A4) at ($(A1)+(0.5,0.2)$); % right

% Bottom left polytope
\coordinate (A2) at (-4.5,-1.2);
\draw[poly] ($(A2)+(-0.5,-0.2)$)
    -- ($(A2)+(0.2,-0.5)$) % bottom
    -- ($(A2)+(-0.1,0.4)$)
    -- ($(A2)+(0.2,-0.5)$)
    -- ($(A2)+(0.5,0.1)$)
    -- ($(A2)+(-0.1,0.4)$) %top
    -- cycle;
\coordinate (A5) at ($(A2)+(-0.1,0.4)$); % top
\coordinate (A6) at ($(A2)+(0.2,-0.5)$) ; % bottom

% Top right polytope
%\coordinate (B1) at (-1.0,1.0);
%\draw[poly] ($(B1)+(-0.5,-0.2)$)
%    -- ($(B1)+(0.2,-0.6)$) %bottom
%    -- ($(B1)+(0.6,0.0)$)
%    -- ($(B1)+(0.0,0.5)$) %
%    -- cycle;

% Top right polytope
\coordinate (B1) at (-0.9,1.0);
\draw[poly] ($(B1)+(0,-0.5)$) %bottom
    -- ($(B1)+(0.3,-0.1)$)
    -- ($(B1)+(0.3,0.4)$)
    -- ($(B1)+(-0.3,0.4)$) %top
    -- ($(B1)+(-0.6,0)$)
    -- ($(B1)+(-0.6,-0.5)$)
    -- cycle;
\coordinate (B5) at ($(B1)+(-0.3,0.4)$); % top
\coordinate (B6) at ($(B1)+(0,-0.5)$) ; % bottom
\draw[thick] (B1) -- (B6);
\draw[thick] (B1) -- ($(B1)+(0.3,0.4)$);
\draw[thick] (B1) -- ($(B1)+(-0.6,0)$);

% Bottom right polytope
\coordinate (B2) at (-0.9,-1.3);
\draw[poly] ($(B2)+(-0.3,-0.4)$) % left
    -- ($(B2)+(0.4,-0.3)$)
    -- ($(B2)+(0.5,0.2)$) % right
    -- ($(B2)+(-0.3,-0.4)$)
    -- ($(B2)+(0.5,0.2)$)
    -- ($(B2)+(-0.2,0.4)$) 
    -- cycle;
\coordinate (B3) at ($(B2)+(-0.3,-0.4)$); % left
\coordinate (B4) at ($(B2)+(0.5,0.2)$); % right

% Dashed convex hull connections
\draw[dashedhull] (A3) -- (B3);
\draw[dashedhull] (A4) -- (B4);
%\draw[dashedhull] (A1)+(0.5,0.2)
\draw[dashedhull] (A5) -- (B5);
\draw[dashedhull] (A6) -- (B6);

% =====================================================
% MAP PHI
% =====================================================

\draw[->, thick] (0.7,0) -- (2.0,0)
    node[midway, above] {$\phi$};

% =====================================================
% RIGHT: image in the plane
% =====================================================

% Four points
\coordinate (P1) at (4.3,0.6);
\coordinate (P2) at (3.9,-0.4);
\coordinate (P3) at (5.7,0.6);
\coordinate (P4) at (5.7,-0.3);

\fill[point] (P1) circle (1.5pt);
\fill[point] (P2) circle (1.5pt);
\fill[point] (P3) circle (1.5pt);
\fill[point] (P4) circle (1.5pt);

% Connecting opposite pairs
\draw[thick] (P1) -- (P4);
\draw[thick] (P2) -- (P3);

% The plane
\coordinate (Q1) at (2.5,-1.0);
\draw[poly] (Q1)
    -- ($(Q1)+(3.1,0)$)
    -- ($(Q1)+(3.1,0)+(1.6,2.0)$)
    -- ($(Q1)+(1.6,2.0)$)
    -- cycle;

\end{tikzpicture}

\caption{Illustration of the condition in Theorem~\ref{thm:holmsen}, assuming the four convex sets on the left make an independent set in $M$: intersecting convex hulls of the image points under $\phi$ (on the right) imply intersecting convex hulls of the convex sets (on the left). The picture draws inspiration from~\cite{sadovek2026thesis}.}
    \label{fig:holmsen}
\end{figure}

Given a coloring $\F = \CC_1 \cup \dots \cup \CC_n$ into $n$ colors, we will say that a subfamily $\I \subseteq \F$ is \emph{rainbow} if $|\I \cap \CC_i| \le 1$, for each $i=1, \dots, n$.
Assuming all color classes are nonempty, they define a rank $n$ \emph{partition matroid} on the ground set $\F$ with rainbow sets as independent. Kalai and Meshulam obtained far-reaching topological generalizations of Theorem~\ref{thm:col-helly} to matroids that are not necessarily partition matroids, one of which is the following:

\begin{theorem}[{Kalai and Meshulam~\cite{kalai2005topological}}] \label{thm:kalai-meshulam}
    Let $\F$ be a finite family of convex sets in $\R^d$ and $M$ a matroid on the ground set $\F$. Suppose that $\bigcap \I \neq \emptyset$, for every independent set $\I$ in $M$. Then, there exists a subset $\G \subseteq \F$ such that $\bigcap \G \neq \emptyset$ and $\rank(\F \setminus \G) \le d$. 
\end{theorem}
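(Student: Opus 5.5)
The plan is to follow the topological route of Kalai and Meshulam. I would first reduce to a statement about the nerve, and then combine a Leray-type bound on the nerve with a topological colorful Helly theorem for matroids.

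\textbf{Reduction to the nerve.} Let $N = N(\F)$ be the nerve of $\F$: the simplicial complex on vertex set $\F$ whose faces are the subfamilies with nonempty intersection. By the nerve theorem, $N$ is homotopy equivalent to the union of the sets (assuming they are compact, or after the standard reduction to a finite family of polytopes). The same holds for every induced subcomplex $N[\mathcal{S}]$, $\mathcal{S} \subseteq \F$. Every such union is a subset of $\R^d$, so it has vanishing homology in degrees $\ge d$. Hence $N$ is \emph{$d$-Leray}: $\tilde H_j(N[\mathcal{S}]) = 0$ for all $j \ge d$ and all $\mathcal{S}$. The hypothesis says that the independence complex $I(M)$ is a subcomplex of $N$. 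The conclusion asks for a face $\G \in N$ with $\rank(\F \setminus \G) \le d$.

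\textbf{Topological colorful Helly.} The core step is the Kalai--Meshulam lemma: if $X$ is $d$-Leray on vertex set $V$, $M$ is a matroid on $V$ with $I(M) \subseteq X$, then some face $\sigma \in X$ satisfies $\rank(V \setminus \sigma) \le d$. I would prove it by induction on $|V|$, arguing by contradiction. Suppose that for every face $\sigma$ we have $\rank(V\setminus\sigma) \ge d+1$. I would then show that $X$ has nonvanishing homology in some degree $\ge d$ on some induced subcomplex, which contradicts the Leray property. The mechanism is the following.
\begin{itemize}
\item For $v \in V$, the link $\link_X(v)$ is again $d$-Leray, being induced-subcomplex-controlled, in the sense that $\tilde H_j(\link(v)[\mathcal{S}]) = 0$ for $j \ge d$.
\item The contraction $M/v$ has independence complex contained in $\link_X(v)$.
\item Apply induction to the pair $(\link_X(v), M/v)$, and also to the pair $(X - v, M \setminus v)$.
\item Compare the two via the Mayer--Vietoris sequence for $X = (X - v) \cup \mathrm{st}(v)$.
\end{itemize}
Cleaner alternatives exist. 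One can use the matroidal nerve lemma: a $d$-Leray complex $X$ and a map $I(M) \to X$ imply $\rho(V \setminus \sigma) \le d$. Another option, which I would actually try first, is Kalai--Meshulam's homological argument. It uses that the independence complex of a matroid of rank $r$ is $(r-2)$-connected, and more precisely that for any $\mathcal{S}$, $I(M|_{\mathcal{S}})$ is homotopically a wedge of $(\rank\mathcal{S}-1)$-spheres. It then considers the complex $K = \{\tau : \rank(V\setminus\tau) \ge d+1\}$ together with $X$, and derives a nonzero class in $\tilde H_{\ge d}$ of an induced subcomplex of $X$ from the sphere-wedge structure.

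\textbf{Main obstacle.} The main obstacle is this homological lemma, specifically producing a nontrivial high-dimensional homology class in an induced subcomplex of $X$ from the matroid hypothesis. I expect to handle it by induction on $|V|$ via deletion and contraction of a non-loop vertex $v$. Since $X$ is $d$-Leray, the pair $(X, X - v)$ has relative homology $\tilde H_{j-1}(\link_X v)$, and the Leray condition then controls it. The edge cases also need care: loops, where $v$ can be discarded, and $\rank M \le d$, where $\G = \emptyset$ works. Once the lemma is in hand, the theorem follows immediately by taking $\G = \sigma$, since $\sigma \in N$ means exactly that $\bigcap \G \neq \emptyset$.
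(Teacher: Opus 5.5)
\textbf{Comparison with the paper.} Your reduction to the nerve is correct and is the original Kalai--Meshulam route. After passing to polytopes, the nerve $N$ of $\F$ is $d$-Leray, the hypothesis says $I(M)\subseteq N$, and the conclusion asks for a face $\G\in N$ with $\rank(\F\setminus\G)\le d$. The paper does not prove this statement directly. It recovers it as the case $k=r=0$, $\FF=\R$ of Theorem~\ref{thm:main}: take $\phi\colon\F\to\R^0$ constant, and the modeling condition holds with $r_F=1$ and $q_F=q\in\bigcap\I$. The proof of that theorem uses no nerves or Leray numbers. It builds a $\Z_2^d$-equivariant section of $E\to V_d(\R^{d+1})$ over the $d$-skeleton from the connectivity of matroidal joins (Lemma~\ref{lem:conn-matroidal-join}), composes it with $\Phi$, and contradicts Theorem~\ref{thm:equiv-map}.

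\textbf{The gap.} Everything beyond the easy nerve step sits in your topological lemma: if $X$ is $d$-Leray and $I(M)\subseteq X$, then $\rank(V\setminus\sigma)\le d$ for some $\sigma\in X$. That lemma is precisely Kalai and Meshulam's topological colorful Helly theorem, and you do not prove it. If you cite it as a black box, your deduction is complete. But then the substance of the statement is outsourced to its own topological generalization.

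\textbf{Why the sketched induction does not close.} You may truncate $M$ to rank $d+1$; this keeps $I(M)\subseteq X$ and does not weaken the conclusion. The two inductive pairs then fail as follows.
\begin{enumerate}
\item The contraction pair $(\link_X(v), M/v)$ gives nothing. The link $\link_X(v)$ is only $d$-Leray, with no drop in the parameter; for a nerve it is the nerve of $\{F\cap F_v\}$ inside the possibly $d$-dimensional set $F_v$. Meanwhile $\rank(M/v)=d$, so the inductive conclusion is already satisfied by $\tau=\emptyset$.
\item The deletion pair $(X-v, M\setminus v)$ does not inherit your contradiction hypothesis. Induction only yields $\sigma\in X-v$ with $\rank(V\setminus\sigma)\le d+1$, that is, with $v$ a coloop of $M$ restricted to $V\setminus\sigma$. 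To conclude you would need $\sigma\cup\{v\}\in X$, and nothing in the scheme provides it.
\end{enumerate}
So you need either a strengthened inductive statement, or an actual construction of a nonzero class in $\tilde H_j(X[S])$ for some $j\ge d$ and some $S\subseteq V$. Your Mayer--Vietoris and wedge-of-spheres paragraphs indicate where such a class should come from, but they do not produce it.
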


Moving to the opposite extreme, from points ($k = 0$) to hyperplanes ($k = d-1$), Goodman, Pollack, and Wenger in a series of three papers \cites{goodman1988hadwiger,wenger1990generalization, pollack1990necessary} established a necessary and sufficient condition for a family of convex sets in $\R^d$ to admit a hyperplane transversal. (While we don't state the condition in this paragraph, it can be read off from Theorem~\ref{thm:holmsen} below by substituting $M$ to be the partition matroid of a coloring $\F = \CC_1 \cup \dots \cup \CC_{r+2}$, where $\CC_1 = \dots = \CC_{r+2}$.) This celebrated result in geometric transversal theory, which extended previous work by Hadwiger~\cite{hadwiger1957eibereiche} on line transversals in the plane, has been expanded upon in a variety of ways \cites{Anderson1996Oriented,ArochaSpearoids2002}.
In particular, Arocha, Bracho, and Montejano~\cite{arocha2008colorful} obtained a colorful generalization of Hadwiger's result and conjectured a colorful version of the Goodman-Pollack-Wenger theorem for general $d$ and $k=d-1$. Some partial results were obtained by Holmsen and Rold\'an-Pensado~\cite{holmsen2016colored} and later Holmsen proved a more general matroidal extension of the Goodman-Pollack-Wenger theorem:

\begin{theorem}[{Holmsen \cites{holmsen2022colorful, cheong2024some}}] \label{thm:holmsen}
    Let $\F$ be a finite family of convex sets in $\R^d$ and $M$ a matroid on the ground set $\F$. Suppose that for some $0 \le r < d$ there exists a function $\phi \colon \F \to \R^r$ such that
    \begin{equation*}
        \conv \left(\bigcup \phi(\G_1)\right) \cap \conv \left( \bigcup \phi(\G_2)\right) \neq \emptyset ~~ \Longrightarrow ~~ \conv \left(\bigcup \G_1 \right) \cap \conv \left(\bigcup \G_2 \right) \neq \emptyset,
    \end{equation*}
    whenever $\G_1 \cup \G_2$ is independent in $M$. Then, there exists a subset $\G \subseteq \F$ that has a hyperplane transversal and satisfies $\rank(\F \setminus \G) \le r+1$. 
\end{theorem}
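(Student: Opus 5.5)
We prove Theorem~\ref{thm:holmsen} topologically, via a test map on a sphere of directions and a Kalai--Meshulam-type connectivity argument over the independence complex of $M$.

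\textbf{Reduction to directions.} A hyperplane transversal to $\G$ exists iff for some unit vector $u \in S^{d-1}$ the intervals $\langle u, G\rangle \subseteq \R$, $G \in \G$, pairwise intersect. By Helly on the line this is equivalent to $\max_{G} \min \langle u,G\rangle \le \min_G \max\langle u,G\rangle$. For each $u$ let $\G(u)$ be the largest subfamily whose projections have a common point. The goal is to find $u$ with $\rank(\F\setminus \G(u)) \le r+1$. For $u$ where the projections of $\F$ fail to meet, we get a witness: two sets $A,B$ with $A$ strictly on the negative side of $B$ in direction $u$. More generally, the obstruction is a separating pair $\G_1,\G_2$ with $\langle u,\conv\bigcup\G_1\rangle < \langle u,\conv\bigcup\G_2\rangle$.

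\textbf{Topological setup.} Argue by contradiction: suppose that for every $u$, the family of sets not stabbed by the hyperplanes orthogonal to $u$ always has rank $\ge r+2$ in every choice of $\G$. Using the matroid rank condition, for each $u$ we extract an independent set $\I_u = \G_1 \sqcup \G_2$ with $|\I_u| \le r+2$ such that $\conv\bigcup\G_1$ and $\conv\bigcup\G_2$ are strictly separated by a hyperplane normal to $u$. This is the Kalai--Meshulam step (Theorem~\ref{thm:kalai-meshulam} style). The contrapositive of the hypothesis then gives $\conv\phi(\G_1)\cap\conv\phi(\G_2)=\emptyset$ in $\R^r$.

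We organize this into a $\Z/2$-equivariant map. Consider the deleted join of the independence complex $\I(M)$, or rather its subcomplex $K$ of pairs $(\G_1,\G_2)$ of disjoint faces with $\G_1 \cup \G_2$ independent. Assign to each $u$ the set of ``separated'' faces. This yields an equivariant covering of $S^{d-1}$ (antipodal $u \mapsto -u$ swaps $\G_1,\G_2$). Meanwhile $\phi$ induces an affine map $\|K\| \to \R^{r}\times$(join coordinates) that is nonvanishing on the faces where $\conv\phi(\G_1)\cap\conv\phi(\G_2)=\emptyset$. Combining these via a nerve / partition-of-unity argument gives an equivariant map $S^{d-1} \to S^{r}$ with the antipodal actions, or from a sufficiently connected subcomplex into $S^r$.

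\textbf{Key obstacle.} The main difficulty is the connectivity estimate: we need the relevant complex of independent pairs (a matroidal deleted join) to be at least $r$-connected after removing faces of total rank $\le r+1$. Matroid complexes are shellable, and their deleted-join-type constructions remain $(\rank-2)$-connected. I would first try to prove this by homotopy-colimit/nerve arguments over the face poset of $\I(M)$, using that links in matroid complexes are again matroid complexes of lower rank. With $\rank \ge r+2$ this gives $r$-connectivity, hence an equivariant map from an $r$-connected free $\Z/2$-space into $S^{r-1}$ (the nonvanishing $\R^r$-valued test map normalized). That contradicts Borsuk--Ulam (Dold), which finishes the proof. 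If the direct construction fails, I would fall back on Holmsen's original approach: reduce to the partition matroid case by matroid union/intersection arguments, and there use the colorful Goodman--Pollack--Wenger argument via oriented-matroid separation patterns.
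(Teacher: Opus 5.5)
Your proposal has the right ingredients: a test map built from $\phi$ on signed independent sets, Folkman-type connectivity of matroid complexes, and Borsuk--Ulam. But the step that actually produces the equivariant map is missing, and the setup you chose cannot supply it.

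\textbf{The parameter space is too small.} You aim for an equivariant map $S^{d-1}\to S^r$. The theorem allows $r=d-1$, and there the identity is such a map. A concrete instance is $d=1$, $r=0$, which is the one-dimensional Kalai--Meshulam theorem, where $S^0\to S^0$ exists. The offset of the hyperplane has to be a parameter as well.

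\textbf{The gluing step is unjustified.} Choosing one separated independent pair $\I_u$ per direction discards the information that matters. At any single hyperplane, an independent set of $r+2$ missed sets split by side is trivial to find. One side may even be empty, and then the hypothesis on $\phi$ says nothing. Moreover, pairs chosen at nearby parameters need not have an independent, separated union. So a nerve or partition-of-unity argument does not land in any complex on which your test map is nonzero.

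\textbf{The connectivity and the test map are set up for the wrong objects.} The complex of all independent pairs is indeed $(\rank(M)-2)$-connected, since it is a parallel extension of $M$. But your test map vanishes on its nonseparated faces, so its connectivity is not what you need. Also, the test map must take values in $\R^{r+1}$, not $\R^r$. The extra coordinate $\sum_\ell t_\ell z_\ell$ is what encodes that the dependency is affine; an $\R^r$-valued map has zeros the hypothesis does not control. Your fallback of reducing to partition matroids is a hope, not an argument.

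\textbf{How the paper argues.} In the paper this theorem is the case $k=d-1$, $\FF=\R$ of Theorem~\ref{thm:main}. Holmsen's hypothesis says precisely that $\phi$ models $(1,M,\R)$-dependencies, with $\G_1,\G_2$ the positive and negative supports of an affine dependency.
\begin{itemize}
\item The proof lifts each $F$ to $\hat F=F\times\{1\}$. It parametrizes all oriented affine hyperplanes, together with the two at infinity, by $v\in S^d=V_1(\R^{d+1})$.
\item The global complex $X$ consists of signed independent sets $(\G_+,\G_-)$ with $\conv\bigcup\G_+\cap\conv\bigcup\G_-=\emptyset$.
\item The test map sends a vertex $(F,z)$ to $z\,(1,\phi(F))\in\R^{r+1}$, and it is nonvanishing on $X$ by the hypothesis.
\item Connectivity is needed only fiberwise. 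The fiber $X(v)\subseteq X$ is the matroid $M[S(v)]$ on the sets missed by the hyperplane $v$, each labelled by its side.
\item After truncating $M$ to rank $r+2$, the contradiction hypothesis gives $\rank(M[S(v)])=\rank(M)=r+2$. Hence $X(v)$ is $r$-connected by Lemma~\ref{lem:conn-matroidal-join}.
\item By Lemma~\ref{lem:X(v)}, $X(v)\subseteq X(u)$ for all $u$ near $v$.
\item These two facts let one build a $\Z_2$-equivariant section over $\sk_{r+1}(S^d)$, cell by cell on a fine triangulation.
\item Composing the section with the test map gives an equivariant map $\sk_{r+1}(S^d)\to S^r$. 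This contradicts Borsuk--Ulam, in the form of Theorem~\ref{thm:equiv-map} with $n=1$.
\end{itemize}
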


The condition in the theorem above is illustrated in Figure~\ref{fig:holmsen}.
A colorful version is obtained by letting $M$ be the partition matroid of a coloring $\F = \CC_1 \cup \dots \cup \CC_{r+2}$ and, as already mentioned, the Goodman-Pollack-Wenger theorem \cites{goodman1988hadwiger,wenger1990generalization, pollack1990necessary} corresponds to the special case $\CC_1 = \dots = \CC_{r+2}$.

In their first of three papers~\cite{goodman1988hadwiger}, Goodman and Pollack posed the problem of characterizing when a finite family of convex sets in $\mathbb{R}^d$ admits a $k$-transversal, for any $0 \le k \le d-1$.
A first substantial step was obtained by McGinnis \cites{mcginnis2023complex, mcginnis2023necessary}, who established a \emph{complex} analogue of the Goodman--Pollack--Wenger theorem. In this setting, one considers a finite family $\mathcal{F}$ of convex sets in $\mathbb{C}^d$, and the transversal is a complex hyperplane, that is, a complex $(d-1)$-dimensional affine subspace (which has real codimension two).

More recently, McGinnis and the author~\cite{mcginnis2026necessary} resolved the Goodman--Pollack problem in full generality, for all $0 \le k \le d-1$, in both real and complex settings. The two extreme cases $k=0$ and $k=d-1$ recover Helly's and the Goodman-Pollack-Wenger theorem, respectively. However, the characterization of the existence of a $k$-transversal (over a field $\mathbb{F} \in \{\mathbb{R}, \mathbb{C}\}$) is inherently linear-algebraic.
Roughly speaking, a family $\mathcal{F}$ in $\FF^d$ satisfies the condition if there exists a function $\phi : \mathcal{F} \to \mathbb{F}^k$ such that any $d-k$ affine $\FF$-dependencies of points in $\im(\phi)$ can be pulled back in a \emph{proportional} way to affine $\FF$-dependencies of points in elements of $\F$. The precise formulation of the condition may be found in Definition~\ref{def:model-col-dep} below by setting all color classes $\mathcal{F}_i$ to be equal and we refer the reader to Figure~\ref{fig:pullback} for illustration.

Despite this progress, it has been of recurring interest in the community whether a more general \emph{colorful} solution to the Goodman--Pollack problem can be obtained, one that would unify and extend the colorful results described above:

\begin{problem}[{Colorful Goodman-Pollack problem, \cite[Sec.~6]{cheong2024some}, \cite[Sec.~5]{mcginnis2026necessary}}] \label{prob:col-G-P}
   Let $0 \le k < d$ be integers and $\F$ a finite family of convex sets $\F$ in $\R^{d}$ together with a coloring $\F = \CC_1 \cup \dots \cup \CC_n$ into $n = (d-k)(k+1)+1$ colors. Find a \emph{condition} for which the following is true: if $\F$ is such that any rainbow choice of convex sets $F_1 \in \CC_1, \dots, F_n \in \CC_n$ satisfies \emph{the condition}, then there exists a color $i$ such that $\CC_i$ admits a $k$-transversal.
\end{problem}

\subsection*{Our main result}

We provide a solution to the colorful Goodman-Pollack problem (see Corollary~\ref{cor:colorful}) by establishing a more general matroidal Theorem~\ref{thm:main}, extending all of the previous theorems. 
We introduce the following notion (see also~\cite[Def.~1.2]{mcginnis2026necessary}):

\begin{definition} \label{def:model-M,k-dep}
    Let $0 \le r \le k < d$ be integers and $\FF \in \{\R, \C\}$ a field. For a family of convex sets $\F$ in $\FF^d$, and a matroid $M$ on the ground set $\F$. A function
    \begin{equation*}
        \phi \colon \F \longrightarrow \FF^r
    \end{equation*}
    is said to \emph{model $(d-k, M,\FF)$-dependencies of $\F$} if the following condition is satisfied: for any independent set $\I$ in $M$ and any $d-k$ affine $\FF$-dependencies
    \begin{equation*}
    \sum_{F \in \I} a_{j,F} = 0 \in \FF,~ \sum_{F \in \I} a_{j,F} \phi(F) = 0 \in \FF^r, \hspace{3mm} \text{for}~ j=1, \dots, d-k,
\end{equation*}
which are not all trivial (i.e., some $a_{j,F} \neq 0$), there exists real numbers $r_F \ge 0$ and points $q_F \in F$, for every $F \in \I$, such that $d-k$ affine $\FF$-dependencies
\begin{equation*}
    \sum_{F \in \I} r_F a_{j,F} = 0 \in \FF,~ \sum_{F\in \I} r_F a_{j,F} q_F = 0 \in \FF^d, \hspace{3mm} \text{for}~ j=1, \dots, d-k,
\end{equation*}
are not all trivial (i.e., some $r_F a_{j,F} \neq 0$).
\end{definition}

\begin{figure}
    \centering
    \begin{tikzpicture}[
    scale=0.8,
    poly/.style={draw=black, thick},
    plane/.style={draw=black},
    dashedhull/.style={dashed},
    point/.style={circle, fill=black, inner sep=1.5pt},
    orpoint/.style={circle, fill=YellowOrange, inner sep=1.5pt},
    >=Latex
]

% =====================================================
% LEFT: four polytopes
% =====================================================

% Top left polytope
\coordinate (A1) at (-4,0.9);
\draw[poly] ($(A1)+(-0.4,-0.3)$) %left
    -- ($(A1)+(0.3,-0.4)$)
    -- ($(A1)+(0.5,0.2)$) %right
    -- ($(A1)+(0.0,0.5)$)
    -- ($(A1)+(0.3,-0.4)$)
    -- ($(A1)+(-0.4,-0.3)$)
    -- ($(A1)+(0.0,0.5)$);
\coordinate (A3) at ($(A1)+(-0.4,-0.3)$); % left
\coordinate (A4) at ($(A1)+(0.5,0.2)$); % right

% Bottom left polytope
\coordinate (A2) at (-4.5,-1.2);
\draw[poly] ($(A2)+(-0.5,-0.2)$)
    -- ($(A2)+(0.2,-0.5)$) % bottom
    -- ($(A2)+(-0.1,0.4)$)
    -- ($(A2)+(0.2,-0.5)$)
    -- ($(A2)+(0.5,0.1)$)
    -- ($(A2)+(-0.1,0.4)$) %top
    -- cycle;
\coordinate (A5) at ($(A2)+(-0.1,0.4)$); % top
\coordinate (A6) at ($(A2)+(0.2,-0.5)$) ; % bottom

% Top right polytope
%\coordinate (B1) at (-1.0,1.0);
%\draw[poly] ($(B1)+(-0.5,-0.2)$)
%    -- ($(B1)+(0.2,-0.6)$) %bottom
%    -- ($(B1)+(0.6,0.0)$)
%    -- ($(B1)+(0.0,0.5)$) %
%    -- cycle;

% Top right polytope
\coordinate (B1) at (-0.9,1.0);
\draw[poly] ($(B1)+(0,-0.5)$) %bottom
    -- ($(B1)+(0.3,-0.1)$)
    -- ($(B1)+(0.3,0.4)$)
    -- ($(B1)+(-0.3,0.4)$) %top
    -- ($(B1)+(-0.6,0)$)
    -- ($(B1)+(-0.6,-0.5)$)
    -- cycle;
\coordinate (B5) at ($(B1)+(-0.3,0.4)$); % top
\coordinate (B6) at ($(B1)+(0,-0.5)$) ; % bottom
\draw[thick] (B1) -- (B6);
\draw[thick] (B1) -- ($(B1)+(0.3,0.4)$);
\draw[thick] (B1) -- ($(B1)+(-0.6,0)$);

% Bottom right polytope
\coordinate (B2) at (-0.9,-1.3);
\draw[poly] ($(B2)+(-0.3,-0.4)$) % left
    -- ($(B2)+(0.4,-0.3)$)
    -- ($(B2)+(0.5,0.2)$) % right
    -- ($(B2)+(-0.3,-0.4)$)
    -- ($(B2)+(0.5,0.2)$)
    -- ($(B2)+(-0.2,0.4)$) 
    -- cycle;
\coordinate (B3) at ($(B2)+(-0.4,-0.4)$); % left
\coordinate (B4) at ($(B2)+(0.5,0.2)$); % right

% Dashed convex hull connections
\coordinate (AUL) at ($(A1)+(0.3,0.0)$);
\coordinate (BBR) at ($(B2)+(-0.1,0.1)$);
\coordinate (ABL) at ($(A2)+(0.22,0.05)$);
\coordinate (BUR) at ($(B1)+(-0.4,-0.2)$);
\draw[YellowOrange, thick] (AUL) -- (BBR);
\draw[YellowOrange, thick] (ABL) -- (BUR);
\fill[orpoint] (AUL) circle (1.5pt);
\fill[orpoint] (BBR) circle (1.5pt);
\fill[orpoint] (ABL) circle (1.5pt);
\fill[orpoint] (BUR) circle (1.5pt);

% =====================================================
% MAP PHI
% =====================================================

\draw[->, thick] (0.7,0) -- (2.0,0)
    node[midway, above] {$\phi$};

% =====================================================
% RIGHT: image in the plane
% =====================================================

% Four points
\coordinate (P1) at (4.3,0.6);
\coordinate (P2) at (3.9,-0.4);
\coordinate (P3) at (5.7,0.6);
\coordinate (P4) at (5.7,-0.3);

\fill[point] (P1) circle (1.5pt);
\fill[point] (P2) circle (1.5pt);
\fill[point] (P3) circle (1.5pt);
\fill[point] (P4) circle (1.5pt);

% Connecting opposite pairs
\draw[thick] (P1) -- (P4);
\draw[thick] (P2) -- (P3);

% The plane
\coordinate (Q1) at (2.5,-1.0);
\draw[poly] (Q1)
    -- ($(Q1)+(3.1,0)$)
    -- ($(Q1)+(3.1,0)+(1.6,2.0)$)
    -- ($(Q1)+(1.6,2.0)$)
    -- cycle;

\end{tikzpicture}
    \caption{An illustration of the condition from~\cite{mcginnis2026necessary} in the case of a hyperplane transversal ($k=d-1$): an affine dependency of points in the image of $\phi$ is pulled back to an affine dependency of points in elements of $\F$. The picture draws inspiration from~\cite{sadovek2026thesis}.}
    \label{fig:pullback}
\end{figure}

The $k=d-1$ case of Definition~\ref{def:model-M,k-dep} is illustrated in Figure~\ref{fig:pullback}, assuming the four convex sets on the left hand side of the picture form an independent set in the matroid.

In what follows, we will say that an $\FF$-affine $k$-dimensional subspace $V \subseteq \FF^d$ is an \emph{$\FF$-$k$-transversal} to a family of convex sets $\F$ in $\FF^d$ if $V$ intersects every element of $\F$. Relying on a method by Holmsen~\cite{holmsen2022colorful} we will prove the following, which is the main result of the paper:

\begin{theorem} \label{thm:main}
    Let $0 \le r \le k < d$ be integers, $\FF \in \{\R, \C\}$ a field, $\F$ a finite family of compact convex sets in $\FF^d$, and $M$ a matroid on the ground set $\F$. Suppose that there exists a function $\phi \colon \F \to \FF^r$ that models $(d-k, M,\FF)$-dependencies of $\F$. Then, there exists a subfamily $\G \subseteq \F$ which has an $\FF$-$k$-transversal for which
    \begin{equation*}
        \rank(\F \setminus \G) \le \dim_\R(\FF) \cdot (d-k)(r+1).
    \end{equation*}
\end{theorem}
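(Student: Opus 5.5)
We argue by contradiction, following Holmsen's method. Suppose that every subfamily $\G \subseteq \F$ with $\rank(\F\setminus\G) \le N := \dim_\R(\FF)(d-k)(r+1)$ fails to have an $\FF$-$k$-transversal. Equivalently, for every \emph{closed} subset $\G$ of $M$ (flat) of corank at most $N$, there is no $\FF$-$k$-transversal. We parametrize candidate transversals by pairs $(W, x)$, where $W$ is a $(d-k)$-frame $w_1,\dots,w_{d-k}$ in $(\FF^d)^*$, i.e.\ a point of the Stiefel manifold $\stiefel{d-k}{d}$, and $x \in \FF^{d-k}$ is a translation. For each $F \in \F$, let $K_F(W) = \{(\langle w_j,q\rangle)_j : q\in F\} \subseteq \FF^{d-k}$ be the compact convex image of $F$. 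Then $V=\{W=x\}$ is a transversal of $\G$ iff $x \in \bigcap_{F\in\G} K_F(W)$. Failure therefore means: for each $W$, the family $\{K_F(W)\}$ in $\FF^{d-k}\cong \R^{\dim_\R(\FF)(d-k)}$ has no common point on any large enough subfamily.

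The topological engine I would use is a \emph{matroidal join}: the homotopy colimit over the face poset of the independence complex of $M$, assigning to an independent set $\I$ the space of ``configurations'' $\{(\lambda_F, q_F, \text{coefficients}) \}$ encoding affine dependencies. Concretely, I would build a $G$-equivariant map ($G=O(d-k)$ or $U(d-k)$ acting on frames, or at least $\Z/2$ resp.\ $S^1$ acting diagonally) from the fibered space over $\stiefel{d-k}{d}$ whose fiber over $W$ is the matroidal join $\ast_M \{\phi(F)\}\times(\text{coefficient data})$. To this space I would attach the test map that sends a point $\sum_F \lambda_F(F, a_{\cdot,F})$ to $\bigl(\sum\lambda_F a_{j,F},\ \sum \lambda_F a_{j,F}\phi(F)\bigr)_j \in (\FF\oplus\FF^r)^{d-k}$. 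A zero of this map is exactly a nontrivial system of $d-k$ affine dependencies on an independent set. By the modeling hypothesis (Definition~\ref{def:model-M,k-dep}), such a zero pulls back to nontrivial dependencies $\sum r_F a_{j,F}q_F=0$, which, after solving for $W$ dual to the dependency vectors, will yield a common point $x$ of the sets $K_F(W)$. Combined with the Kalai--Meshulam style nerve argument (Theorem~\ref{thm:kalai-meshulam} applied in $\FF^{d-k}$ to $\{K_F(W)\}$, whose real dimension is $\dim_\R(\FF)(d-k)$), this is what upgrades to a transversal of a flat $\G$ of corank bounded as required. The factor $(r+1)$ comes from the $r+1$ real/complex coordinates per dependency. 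So the contrapositive gives a nowhere-vanishing equivariant map from the matroidal join to a representation of real dimension $\dim_\R(\FF)(d-k)(r+1)$, and hence to its unit sphere.

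The key steps are as follows. (i) Prove a connectivity estimate for the matroidal join: if every flat of corank $\le N$ fails, the join is roughly $(N-1)$-connected, using that independence complexes of matroids are shellable, so that the homotopy colimit has the connectivity of an ordinary join of the appropriate rank. (ii) Combine this with the Stiefel manifold $\stiefel{d-k}{d}$ to get an equivariant space $X$. (iii) Derive a contradiction with a nonexistence theorem for equivariant maps $X \to S(\FF^{(d-k)(r+1)})$. This adapts the known results that there is no $O(m)$/$U(m)$-equivariant map from $\stiefel{m}{d}$ (joined with highly connected pieces) to the sphere of $\mathrm{Hom}(\FF^{m},\FF^{r+1})$, proved via the Euler class or Stiefel--Whitney/Chern class computation when $r<d$.

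I expect the main obstacle to be step (iii) together with the correct bookkeeping in step (i). One must show that the obstruction, namely the top Stiefel--Whitney class (real case) or Chern class (complex case) of the bundle $\gamma^{\oplus(r+1)}$ over the Grassmannian-type base twisted by the join, does not vanish, while the join contributes only through its connectivity. I would first try a Serre spectral sequence / Fadell--Husseini index computation over $\stiefel{d-k}{d}/G = G_{d-k}(\FF^d)$, using $r\le k$ so that $w_{top}(\gamma^{\perp})$-type products are nonzero.
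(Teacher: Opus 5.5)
You have the right scaffolding: contradiction, a matroidal join over the independence complex of $M$, the test map $\bigl(\sum_F\lambda_F a_{j,F},\ \sum_F\lambda_F a_{j,F}\phi(F)\bigr)_{j}$, and an equivariant obstruction on a Stiefel manifold. But the geometric core is missing, and the way you use the modeling hypothesis does not work.

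The fiber over a candidate flat has to encode \emph{why} that flat fails. The paper lifts $\F$ to $\hat\F=\F\times\{1\}\subseteq\FF^{d+1}$ and takes $v\in\stiefel{d-k}{d+1}$. It lets $S(v)$ index the sets missed by the flat $(\linspan_\FF\{v_1,\dots,v_{d-k}\})^{\perp}\cap(\FF^d\times\{1\})$. The fiber is then the matroidal join over $M[S(v)]$ of the sets of separating directions $\{z\in C_{d-k}(\FF)\colon \langle \sum_j z_jv_j,\hat F_i\rangle>0\}$. This does two jobs at once:
\begin{itemize}
\item The no-transversal assumption forces $\rank M[S(v)]=\rank M$ (after truncating $M$ to rank $N+1$). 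This gives $(\rank M-2)$-connected fibers and hence an equivariant section over the $N$-skeleton.
\item Separation rules out every nonnegatively weighted system $\sum_\ell a_\ell z_{j,\ell}\hat q_\ell=0$ (pair with the $v_j$ and take real parts). That is exactly what the modeling hypothesis would produce from a zero of the test map.
\end{itemize}
Your fiber ``$\ast_M\{\phi(F)\}\times$(coefficient data)'' does not depend on $W$ and carries no separation information, so neither job can be done.

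Instead you assert that the pulled-back dependency $\sum_F r_Fa_{j,F}q_F=0$ yields a common point of the $K_F(W)$, and then invoke Kalai--Meshulam in $\FF^{d-k}$. This fails on three counts. Affine dependencies among points of the sets are Radon-type intersection statements and do not produce a transversal. Kalai--Meshulam would require the projections of \emph{every} independent set to intersect for a fixed $W$, which is not given. And it would return the corank bound $\dim_\R(\FF)(d-k)$, not $\dim_\R(\FF)(d-k)(r+1)$.

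Your base space is also too small. Parametrizing by $W\in\stiefel{d-k}{d}$ with a separate translation either leaves a noncompact base or loses the translations. The obstruction you propose over $G_{d-k}(\FF^d)$, the class $w_{d-k}(\gamma)^{r+1}$ (resp.\ $c_{d-k}(\gamma)^{r+1}$), is nonzero only when $r+1\le k$.

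For $r=k$ the nonexistence statement you want is in fact false. Send a frame $(v_1,\dots,v_{d-k})$ to the linear map $e_i\mapsto\pi(v_i)$, where $\pi$ is the projection onto the first $k+1$ coordinates. This map is equivariant and nowhere zero, since $\ker\pi$ has dimension $d-k-1$ and cannot contain $d-k$ orthonormal vectors. For $k=d-1$ it is just the identity of $S(\FF^d)$.

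The homogenization to $\FF^{d+1}$ is what fixes this. It compactifies the space of affine $k$-flats and replaces $\stiefel{d-k}{d}$ by $\stiefel{d-k}{d+1}$. For that manifold the skeletal nonexistence result, Theorem~\ref{thm:equiv-map} with $k_i=r+1\le (d+1)-i$, applies exactly when $r\le k$.
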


The theorem is trivially true if $\rank(M) \le \dim_\R(\FF) \cdot (d-k)(r+1)$ by taking $\G = \emptyset$. As noted in \cite[Rem.~3.2]{mcginnis2026necessary}, the $k=0$ case of the above theorem recovers Theorem~\ref{thm:kalai-meshulam} of Kalai and Meshulam, while the $k=d-1$ case yields Holmsen's Theorem~\ref{thm:holmsen}. Finally, Theorem~\ref{thm:main} represents a matroidal solution to the colorful Goodman-Pollack problem (see Problem~\ref{prob:col-G-P}), extending the earlier noncolorful result by McGinnis and the author~\cite{mcginnis2026necessary}.

We highlight the special case when $M$ corresponds to the \emph{partition matroid} of a coloring into nonempty color classes. In this specific setting Definition~\ref{def:model-M,k-dep} then takes the following form:

\begin{definition} \label{def:model-col-dep}
    Let $0 \le r \le k < d$ be integers, $\FF \in \{\R, \C\}$ a field, and
    \[
        \F = \CC_1 \cup \dots \cup \CC_n
    \]
    a partition of a family of convex sets in $\FF^d$ into $n$ nonempty color classes. A function $\phi \colon \F \to \FF^r$  is said to \emph{model $d-k$ colorful $\FF$-dependencies of $\F$} if the following condition is satisfied: for any choice $F_1 \in \CC_1, \dots, F_n \in \CC_n$ and any $d-k$ affine $\FF$-dependencies
    \begin{equation*}
        \sum_{\ell = 1}^n a_{j,\ell} = 0 \in \FF,~ \sum_{\ell = 1}^n a_{j,\ell} \phi(F_\ell) = 0 \in \FF^r, \hspace{3mm} \text{for}~ j=1, \dots, d-k,
    \end{equation*}
    which are not all trivial (i.e., some $a_{j,\ell} \neq 0$), there exists real numbers $r_1, \dots, r_n \ge 0$ and points $q_1 \in F_1, \dots, q_n \in F_n$ such that $d-k$ affine $\FF$-dependencies
    \begin{equation*}
        \sum_{\ell = 1}^n r_\ell a_{j,\ell} = 0 \in \FF,~ \sum_{\ell = 1}^n r_\ell a_{j,\ell} q_\ell = 0 \in \FF^d, \hspace{3mm} \text{for}~ j=1, \dots, d-k,
    \end{equation*}
    are not all trivial (i.e., some $r_\ell a_{j,\ell} \neq 0$). 
\end{definition}

We now state the colorful special case of Theorem~\ref{thm:main}:

\begin{corollary} \label{cor:colorful}
    Let $0 \le r \le k < d$ be integers, $\FF \in \{\R, \C\}$ a field, and 
    \[
        \F = \CC_1 \cup \dots \cup \CC_n
    \]
    a partition of a finite family of convex sets in $\FF^d$ into
    \[
        n \ge \dim_\R(\FF) \cdot (d-k)(r+1) + 1
    \]
    nonempty color classes. Suppose that there exists a function $\phi \colon \F \to \FF^r$ that models $d-k$ colorful $\FF$-dependencies of $\F$. Then, there exists a color $i$ such that $\CC_i$ has an $\FF$-$k$-transversal.
\end{corollary}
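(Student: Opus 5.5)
We deduce Corollary~\ref{cor:colorful} from Theorem~\ref{thm:main} by taking $M$ to be the partition matroid of the coloring $\F = \CC_1 \cup \dots \cup \CC_n$. The independent sets of $M$ are exactly the rainbow subfamilies, and since all color classes are nonempty, $M$ has rank $n$.

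The first step is to check that a function $\phi$ modeling $d-k$ colorful $\FF$-dependencies (Definition~\ref{def:model-col-dep}) also models $(d-k,M,\FF)$-dependencies (Definition~\ref{def:model-M,k-dep}). Take an independent set $\I$, i.e.\ a rainbow set, and let it have at most one element from each class. We extend $\I$ to a full rainbow choice $F_1 \in \CC_1, \dots, F_n \in \CC_n$, using nonemptiness. For each color $\ell$ missing from $\I$ we pick an arbitrary $F_\ell$ and set $a_{j,\ell}=0$. A nontrivial system of $d-k$ affine dependencies on $\I$ then becomes a nontrivial system on the full choice. Definition~\ref{def:model-col-dep} supplies $r_\ell \ge 0$ and $q_\ell \in F_\ell$ with the pulled-back system nontrivial. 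The padded terms vanish because their $a_{j,\ell}=0$, so restricting to $\I$ gives the required $r_F$ and $q_F$, and nontriviality is preserved.

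A technical point is that Theorem~\ref{thm:main} assumes compact sets, while the corollary does not. We first handle the compact case and then reduce to it by the following approximation. For each full rainbow choice and each nontrivial coefficient system, fix witnessing points $q_\ell \in F_\ell$. Finitely many such witnesses do not suffice, because the coefficient systems form an infinite family. However, after normalizing, the coefficient systems form a compact set, and the condition depends only on the pattern of the $q_\ell$ with respect to the $r_\ell$. So we replace each $F$ by a polytope $F' = \conv$ of finitely many points of $F$, and we choose these points so that the condition still holds for $F'$. If this is awkward, an alternative is to invoke the fact that the proof of Theorem~\ref{thm:main} likely uses compactness only mildly. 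I expect this compactness reduction to be the main obstacle. A plausible route is to note that the set of all pulled-back dependencies is determined by the convex hulls, and that one may take $q_\ell$ from a finite set via Carathéodory-type arguments. A $k$-transversal to $F' \subseteq F$ is then a transversal to $F$ as well.

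Now apply Theorem~\ref{thm:main}. It yields $\G \subseteq \F$ with an $\FF$-$k$-transversal and
\[
\rank(\F \setminus \G) \le \dim_\R(\FF)(d-k)(r+1) < n.
\]
In the partition matroid, the rank of a set equals the number of colors it meets. Since this rank is less than $n$, some color $i$ is disjoint from $\F \setminus \G$, so $\CC_i \subseteq \G$. Hence $\CC_i$ has an $\FF$-$k$-transversal, as claimed.
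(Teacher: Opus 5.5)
Your reduction to Theorem~\ref{thm:main} is correct and is exactly how the paper obtains the corollary: the paper simply presents Corollary~\ref{cor:colorful} as the partition-matroid case of Theorem~\ref{thm:main}, with no separate argument. Your padding step is right and fills in a detail the paper only asserts. You complete a rainbow $\I$ to a full rainbow choice using zero coefficients, and note that any nonzero $r_\ell a_{j,\ell}$ must then come from $\I$; this shows that Definition~\ref{def:model-col-dep} implies $(d-k,M,\FF)$-modeling for the partition matroid. Your final count is also right: $\rank(\F\setminus\G)<n$ means $\F\setminus\G$ misses some color, so $\CC_i\subseteq\G$.

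The compactness paragraph, however, is not a proof. Theorem~\ref{thm:main} assumes compact sets, so the specialization (yours and the paper's) only covers compact families. The paper attempts no reduction from arbitrary convex sets, and your sketch does not supply one.

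Definition~\ref{def:model-col-dep} must hold for a continuum of coefficient systems $(a_{j,\ell})$. A witness $(r_\ell,q_\ell)$ for one system is in general not a witness for any nearby system. For fixed $(r_\ell,q_\ell)$, the equations $\sum_\ell r_\ell a_{j,\ell}=0$ and $\sum_\ell r_\ell a_{j,\ell}q_\ell=0$ cut out a linear subspace of coefficient systems, not a neighborhood. So compactness of the normalized coefficient set gives no finite subcover. Carath\'eodory only makes each individual witness finitely supported; it does not make the union of witnesses over all systems finite or even bounded. Put differently, the pulled-back condition depends only on $U=\linspan(a_{1,\cdot},\dots,a_{d-k,\cdot})$, which ranges over Grassmannians. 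There is no reason the admissible $(r_\ell,q_\ell)$ for different $U$ should lie in the convex hull of a common finite set of points of $F_\ell$.

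The finite-witness trick does work when the condition collapses to finitely many combinatorial conditions. For example, when $\FF=\R$ and $d-k=1$, only the sign pattern of $(a_\ell)$ matters, and the condition becomes Holmsen's intersecting-hulls condition. Likewise, in the Dol'nikov setting of Section~\ref{sec:dolnikov-matroid} the witnesses are common points of finitely many subfamilies. It does not work in general.

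Your fallback is also unavailable. Compactness is used substantively in the proof of Theorem~\ref{thm:main}: it makes condition $(c_2)$ and the local inclusion-monotonicity of $S(v)$ open in $v$, and this fails for unbounded sets. So either state the corollary for compact convex sets, which is what the argument actually proves, or give a genuine approximation argument.
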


As noted above, the $k=0$ and $k=d-1$ cases of the corollary correspond, respectively, to Theorem~\ref{thm:col-helly} of Lov\'asz, and the colorful case of Holmsen's Theorem~\ref{thm:holmsen}.

\subsection*{Matroidal Dol'nikov-type result}

As an application of our main result, we obtain Theorem~\ref{thm:dolnikov-matroids} below that can be thought of as a matroidal Dol'nikov-style theorem. First, we recall Dol'nikov's celebrated result on the existence of $k$-transversals. It is a generalization of the central transversal theorem, which was independently discovered by \v{Z}ivaljevi\'c and Vre\'cica~\cite{zivaljevic1990extension}:

\begin{theorem}[{Dol'nikov \cites{dol1993transversals, Dolnikov:1992ut}}] \label{thm:dolnikov}
    Let $0 \le k < d$ be integers and suppose that
    $\F_1,\dots,\F_{k+1}$ are finite families of convex sets in $\mathbb{R}^d$ such that for each $j=1, \dots, k+1$, every $d-k+1$ or fewer sets in $\F_j$ have a nonempty intersection. Then the family $\F_1 \cup \dots \cup \F_{k+1}$ has a (real) $k$-transversal.
\end{theorem}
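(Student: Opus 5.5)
We plan to prove Theorem~\ref{thm:dolnikov} with the classical topological scheme: apply Helly's theorem fiberwise over the Grassmannian, then use a characteristic class to force the resulting sections to vanish simultaneously. This uses none of the later machinery of the paper.

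\emph{Reduction to compact sets.} First we may assume all sets are compact convex polytopes. For every subfamily of at most $d-k+1$ sets within one $\F_j$, pick a point in its intersection, and replace each $F$ by the convex hull of the chosen points lying in $F$. This preserves the hypothesis, and a transversal of the smaller sets is a transversal of the original ones.

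\emph{The fiberwise Helly step.} Let $\gamma$ be the tautological $(d-k)$-plane bundle over the Grassmannian $G_{d-k}(\R^d)$, and let $\pi_L$ denote orthogonal projection onto $L$. For $L\in G_{d-k}(\R^d)$, a $k$-flat orthogonal to $L$ meets every member of $\F_j$ exactly when its intersection point with $L$ lies in
\[
K_j(L)=\bigcap_{F\in\F_j}\pi_L(F).
\]
The sets $\pi_L(F)$ are convex in $L\cong\R^{d-k}$, and every $d-k+1$ of them intersect. Helly's theorem in $\R^{d-k}$ (the case $n=d-k+1$ colors all equal in Theorem~\ref{thm:col-helly}) therefore gives $K_j(L)\neq\emptyset$. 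It thus suffices to find $L$ with $K_1(L)\cap\dots\cap K_{k+1}(L)\neq\emptyset$.

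\emph{Continuous selection.} Next we need continuous sections $s_j(L)\in K_j(L)$. Replace every set by its closed $\varepsilon$-neighbourhood. Then $K_j^\varepsilon(L)$ contains an $\varepsilon$-ball around any point of the old $K_j(L)$, so it is a compact convex body with nonempty interior that varies continuously with $L$ in the Hausdorff metric. Its centroid $s_j(L)$ is then a continuous section of $\gamma$.

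\emph{Topological step.} The differences $s_j-s_{k+1}$ for $j=1,\dots,k$ form a section of $\gamma^{\oplus k}$, a bundle of rank $k(d-k)=\dim G_{d-k}(\R^d)$. Its mod $2$ Euler class is $w_{d-k}(\gamma)^k\in H^{k(d-k)}(G_{d-k}(\R^d);\Z/2)$. This class is nonzero: by Schubert calculus it is $k$ applications of Pieri's rule with the special class $w_{d-k}$, which fills the $(d-k)\times k$ rectangle in exactly one way. Hence the section vanishes at some $L_\varepsilon$, where $s_1(L_\varepsilon)=\dots=s_{k+1}(L_\varepsilon)$. So the $\varepsilon$-thickened family has a $k$-transversal orthogonal to $L_\varepsilon$.

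\emph{Limit and main obstacle.} Letting $\varepsilon\to0$ and using compactness of the Grassmannian and of the sets, a subsequence of these transversals converges to a $k$-flat meeting every original set. The limit argument is routine. The steps needing the most care are the continuity of the centroid selection, which is why we pass to $\varepsilon$-neighbourhoods, and the verification that $w_{d-k}(\gamma)^k\neq0$. For the latter, an alternative check is to note that a generic section of $\gamma^{\oplus k}$ given by $k$ generic points has exactly one zero: the unique $(d-k)$-plane through the origin orthogonal to the affine $k$-flat through $k+1$ generic points.
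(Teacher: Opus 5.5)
Your proof is correct, and it is essentially Dol'nikov's classical argument. The paper takes a different route.

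\textbf{The paper's route.} It never proves Theorem~\ref{thm:dolnikov} directly. Instead it recovers the statement as the noncolorful case of Corollary~\ref{thm:dolnikov-colorful}, with $\FF=\R$, $r=k$, and all $(d-k)(k+1)+1$ color classes taken to be copies of $\F$. So the proof ultimately comes from Theorem~\ref{thm:main}:
\begin{itemize}
\item $\phi$ sends each $\F_j$ to the vertex $p_j$ of a $k$-simplex in $\R^k$.
\item Carath\'eodory in $\R^{d-k}$ reduces the required pulled-back dependencies to a common point of at most $d-k+1$ sets from a single $\F_j$.
\item The topology is a $\Z_2^{d-k}$-equivariant section over the $(d-k)(k+1)$-skeleton of $V_{d-k}(\R^{d+1})$ into a space whose fibres are highly connected matroidal joins (Lemma~\ref{lem:conn-matroidal-join}). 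Composed with $\Phi$, this section contradicts Theorem~\ref{thm:equiv-map}.
\end{itemize}

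\textbf{Comparison.} Your fibrewise-Helly argument over $G_{d-k}(\R^d)$ is far shorter. It needs only the single computation $w_{d-k}(\gamma)^k\neq 0$. However, it depends on every fibre $K_j(L)$ being a nonempty convex set, so that each $j$ admits a global continuous section.

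That is exactly what fails under colorful or matroidal hypotheses. Fibrewise colorful Helly only produces, for each $L$, \emph{some} color class with a common point, and that class can jump as $L$ moves.

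The paper avoids this in two ways. It builds sections only over a skeleton, into fibres whose connectivity is controlled by the matroid rank. It also works with open conditions on the lifted sets in $\R^{d+1}$. As a result it treats $\FF=\C$ uniformly and needs no $\varepsilon$-thickening or limiting argument.

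\textbf{Two details to add.}
\begin{itemize}
\item Hausdorff continuity of $L\mapsto K_j^\varepsilon(L)$ needs a sentence. Transport nearby fibres isometrically to a fixed $L_0$. Then use that an intersection of finitely many convex bodies varies continuously when it has interior points, which the $\varepsilon$-ball guarantees here.
\item In your alternative check of $w_{d-k}(\gamma)^k\neq 0$, you should note that the unique zero is nondegenerate. A direct derivative computation at $L_0$ confirms this.
\end{itemize}
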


A complex analogue of the result above was recently obtained in~\cite{mcginnis2026necessary}.

By Helly's classical theorem~\cite{helly1923mengen} intersection of any $d+1$ convex sets in a family $\F$ ensures that the entire family intersects. 
Dol'nikov theorem can be seen a transversal analogue of Helly's theorem in which it is required that only every $d-k+1$ sets in $\F$ intersect in order to conclude the existence of a $k$-transversal; in fact, the intersection condition is even weaker, as it suffices to require the intersection property merely for those $(d-k+1)$-tuples contained in one of $k+1$ prescribed subfamilies of $\F$.
Transversal analogues of colorful Helly theorem have been extensively studied over the past decades.
In this setting, the family $\F$ is partitioned into color classes, and one assumes that every rainbow choice of sets has nonempty intersection in order to conclude that at least one color class admits a $k$-transversal. Results of this kind with sizes of color classes bounded from above were obtained by Montejano and Karasev~\cite{montejano2013transversals} and Holmsen~\cites{holmsen2021transversal, cheong2024some}. See also the work of Karasev~\cite{karasev2009theorems}, Montejano~\cite{montejano2013transversals} and Strausz~\cite{strausz2022how}.

As a consequence of our main theorem, we derive a matroidal generalization of Dol'nikov’s theorem (see Theorem~\ref{thm:dolnikov-matroids} below) together with a colorful variant (see Corollary~\ref{thm:dolnikov-colorful}). 

\begin{theorem} \label{thm:dolnikov-matroids}
    Let $0 \le r \le k < d$ be integers and $\FF \in \{\R, \C\}$ a field.
    Let $\F = \F_1 \cup \dots \cup \F_{r+1}$ be a family of convex sets in $\FF^d$ and $M$ a matroid on the ground set $\F$.
    Suppose that $\bigcap \I \neq \emptyset$ holds for every independent set $\I \subseteq \F_j$ in $M$ of rank
    \begin{equation*}
        \rank(\I) \le \dim_\R(\FF)\cdot (d-k) + 1,
    \end{equation*}
    where $j=1, \dots, r+1$. Then, there exists $\G \subseteq \F$ such that $\G$ has an $\FF$-$k$-transversal and
    \begin{equation*}
         \rank(\F \setminus \G) \le \dim_\R(\FF) \cdot (d-k)(r+1).
    \end{equation*}
\end{theorem}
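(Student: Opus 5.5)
We derive the statement from Theorem~\ref{thm:main} by choosing $\phi\colon \F \to \FF^r$ to be a map that sends each subfamily $\F_j$ to a single point. Fix affinely independent points $p_1, \dots, p_{r+1} \in \FF^r$, for instance $p_1 = 0$ and $p_{j+1} = e_j$. Put $\phi(F) = p_j$ for $F \in \F_j$; if $F$ lies in several $\F_j$, pick one such $j$. Theorem~\ref{thm:main} then yields exactly the required $\G$, provided $\phi$ models $(d-k, M, \FF)$-dependencies of $\F$. We may also assume the sets are compact, by the usual reduction: replace each set by the convex hull of finitely many witness points of the finitely many intersections in the hypothesis.

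\textbf{Step 1: affine dependencies localize to a single class.} Let $\I$ be independent and take $d-k$ affine dependencies $\sum_{F\in\I} a_{j,F}=0$, $\sum_{F \in \I} a_{j,F}\phi(F)=0$, not all trivial. Group the terms by class, with $A_{j,\ell} = \sum_{F \in \I \cap \F_\ell} a_{j,F}$ (using the chosen class of each $F$). Then $\sum_\ell A_{j,\ell} p_\ell = 0$ and $\sum_\ell A_{j,\ell} = 0$. Affine independence of the $p_\ell$ forces $A_{j,\ell}=0$ for all $j,\ell$. Choose $\ell$ such that some $a_{j,F}\ne 0$ with $F\in \I_\ell := \I\cap\F_\ell$. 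Now set $r_F = 0$ outside $\I_\ell$. It remains to find $r_F \ge 0$ and $q_F \in F$ for $F \in \I_\ell$ such that $\sum_{F\in\I_\ell} r_F a_{j,F} q_F = 0$ for all $j$ and some $r_F a_{j,F} \neq 0$. The condition $\sum_F r_F a_{j,F}=0$ must hold as well.

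\textbf{Step 2: use the intersection hypothesis.} The key observation is that if all $F$ in a subfamily $\mathcal{J}\subseteq\I_\ell$ share a common point $q$, then setting $q_F = q$ and $r_F = 1$ on $\mathcal{J}$ (and $0$ elsewhere) gives $\sum r_F a_{j,F} q_F = q\sum_{F\in\mathcal{J}} a_{j,F}$. This vanishes once $\sum_{F\in\mathcal{J}} a_{j,F}=0$ for every $j$. We therefore need a nonempty subfamily $\mathcal{J}\subseteq \I_\ell$ with these properties:
\begin{itemize}
\item[(a)] $\sum_{F\in\mathcal J} a_{j,F}=0$ for all $j$;
\item[(b)] some $a_{j,F}\neq 0$ with $F\in\mathcal J$;
\item[(c)] $|\mathcal J| \le \dim_\R(\FF)(d-k)+1$.
\end{itemize}
Since $\mathcal J \subseteq \I$ is independent, condition (c) gives $\rank(\mathcal J) = |\mathcal J| \le \dim_\R(\FF)(d-k)+1$. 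The hypothesis then applies and gives $\bigcap\mathcal J\ne\emptyset$.

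To obtain $\mathcal J$ with general nonnegative weights instead of $0/1$, view the vectors $v_F = (a_{1,F},\dots,a_{d-k,F}) \in \FF^{d-k} \cong \R^{m}$ with $m = \dim_\R(\FF)(d-k)$, for $F \in \I_\ell$. These vectors sum to $0$ and are not all zero. So $0$ lies in the convex hull of the nonzero ones, and Carathéodory's theorem in $\R^{m}$ writes $0$ as a positive combination $\sum_{F\in\mathcal J} r_F v_F = 0$ with $|\mathcal J|\le m+1$ and all $v_F\ne 0$. Taking $q_F = q \in \bigcap \mathcal J$ gives
\[
\sum_F r_F a_{j,F} = 0 \quad\text{and}\quad \sum_F r_F a_{j,F} q_F = q\cdot 0 = 0,
\]
with some $r_F a_{j,F}\ne0$. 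This is exactly the pulled-back nontrivial dependency required in Definition~\ref{def:model-M,k-dep}.

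The main obstacle is Step 2: arranging nontriviality while staying within the allowed rank bound. Carathéodory's theorem over $\R^{\dim_\R(\FF)(d-k)}$ handles this, because it yields at most $\dim_\R(\FF)(d-k)+1$ sets, matching the hypothesis. Step 1 and the compactness reduction are routine.
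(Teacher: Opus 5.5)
Your proof is correct and follows the paper's argument. You use the same map $\phi$ that sends each $\F_j$ to one of $r+1$ affinely independent points of $\FF^r$. You use the same observation that affine independence forces the dependency vectors to sum to zero within each class. You then apply Carath\'eodory's theorem in $\R^{\dim_\R(\FF)(d-k)}$ to get a subfamily $\J$ of at most $\dim_\R(\FF)(d-k)+1$ independent sets, pick $q\in\bigcap\J$, and take $q_F=q$. Your compactness reduction and your handling of overlapping classes are small additions that the paper leaves implicit. One caveat on the reduction: a loop of $M$ has no witness points, so replace it by any single point of it instead.
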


Theorem~\ref{thm:kalai-meshulam} of Kalai and Meshulam is the $k=r=0$ and $\FF=\R$ case of the theorem above, while Dol'nikov's Theorem~\ref{thm:dolnikov} is the noncolorful version of the colorful Corollary~\ref{thm:dolnikov-colorful} below.

\begin{corollary}\label{thm:dolnikov-colorful}
    Let $0 \le r \le k < d$ be integers, $\FF \in \{\R, \C\}$ a field, and $\F = \F_1 \cup \dots \cup \F_{r+1}$ a family of convex sets in $\FF^d$ that is also colored into
    \begin{equation*}
        n \coloneqq \dim_\R(\FF) \cdot (d-k)(r+1)+1
    \end{equation*}
    nonempty color classes $\F = \CC_1 \cup \dots \cup \CC_n$.
    Suppose that for all $j=1, \dots, r+1$ and every rainbow subfamily $\G_j \subseteq \F_j$ of size at most $\dim_\R(\FF)\cdot (d-k)+1$, we have
    \begin{equation*}
        \bigcap \G_j \neq \emptyset.
    \end{equation*}
    Then, there exists a color $i $ such that $\CC_i$ has an $\FF$-$k$-transversal.
\end{corollary}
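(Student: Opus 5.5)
Corollary~\ref{thm:dolnikov-colorful} is the partition-matroid case of Theorem~\ref{thm:dolnikov-matroids}, so I will derive it from that theorem. Let $M$ be the partition matroid of the coloring $\F = \CC_1 \cup \dots \cup \CC_n$. Its independent sets are exactly the rainbow subfamilies. Since all color classes are nonempty, $M$ has rank $n = \dim_\R(\FF)\cdot(d-k)(r+1)+1$.

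First I check the hypothesis of Theorem~\ref{thm:dolnikov-matroids}. Fix $j \in \{1,\dots,r+1\}$ and let $\I \subseteq \F_j$ be independent in $M$ with $\rank(\I) \le \dim_\R(\FF)\cdot(d-k)+1$. In a matroid the rank of an independent set equals its cardinality. So $\I$ is a rainbow subfamily of $\F_j$ of size at most $\dim_\R(\FF)\cdot(d-k)+1$, and the assumption of the corollary gives $\bigcap \I \neq \emptyset$. Two boundary points need care. If $\I=\emptyset$, the convention $\bigcap \emptyset = \FF^d$ makes the condition vacuous. The sets $\F_j$ need not be disjoint, but this does not matter, because the hypothesis is stated separately for each $j$.

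Theorem~\ref{thm:dolnikov-matroids} then yields $\G \subseteq \F$ with an $\FF$-$k$-transversal and
\[
    \rank(\F\setminus\G) \le \dim_\R(\FF)\cdot(d-k)(r+1) = n-1.
\]
In the partition matroid, the rank of a subfamily $\mathcal{H}$ is the number of color classes it meets. Hence $\F \setminus \G$ meets at most $n-1$ of the $n$ classes, so some class $\CC_i$ is disjoint from $\F\setminus\G$, that is, $\CC_i \subseteq \G$. Any $\FF$-$k$-transversal of $\G$ meets every member of $\CC_i$, so it is an $\FF$-$k$-transversal of $\CC_i$.

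The only possible obstacle is a mismatch in standing assumptions. Theorem~\ref{thm:dolnikov-matroids} is presumably proved through Theorem~\ref{thm:main}, which asks for compact convex sets and a finite family, while the corollary does not state these explicitly. If compactness is needed, I would handle it as in the classical reduction. For every rainbow subfamily of some $\F_j$ of the allowed size, pick a point in its intersection. Replace each $F$ by the convex hull of the finitely many chosen points lying in $F$. This shrinks each set to a compact polytope, preserves the intersection hypothesis, and any transversal of the shrunken family is a transversal of the original. This works provided $\F$ is finite, which I would assume as in Theorem~\ref{thm:main}.
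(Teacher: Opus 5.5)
Your proposal is correct and is the intended derivation: the paper presents this corollary as the partition-matroid special case of Theorem~\ref{thm:dolnikov-matroids}. There, independent sets are rainbow subfamilies whose rank equals their size, and $\rank(\F\setminus\G)\le n-1$ forces some class $\CC_i\subseteq\G$. Your shrinking to finitely many compact polytopes is a sound way to meet the finiteness and compactness assumptions of Theorem~\ref{thm:main}, which the paper leaves implicit.
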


We also mention extensions of colorful Helly theorem obtained by Arocha, B{\'a}r{\'a}ny, Bracho, Fabila, and Montejano~\cite{arocha2009very} and Mart\'inez-Sandoval, Rold\'an-Pensado, and Rubin~\cite{martinez2020further}. Additionally, colorful $(p,q)$- and fractional Helly-type theorems were obtained by B\'ar\'any, Fodor, Montejano, Oliveros, and P\'or~\cite{barany2014colourful}, while a fractional, colorful, and spherical $k$-transversal theorems for fat convex sets or balls were recently obtained by Jung and P{\'a}lv{\"o}lgyi~\cites{jung2025k, jung2024note}.

\subsection*{Outline of the paper} In Section~\ref{sec:matroidal-joins} we introduce matroidal spaces and joins, and derive some of their properties. In Section~\ref{sec:non-existence-equiv} we adapt previous nonexistence results for equivariant maps from Stiefel manifolds to spheres to obtain similar result on the skeleta. In Section~\ref{sec:aux-matr-space} we define a particular matroidal space needed for the proof and record some of its properties, which are then used in Section~\ref{sec:proof-main} to prove the main theorem. Finally, in Section~\ref{sec:dolnikov-matroid} we derive the matroidal Dol'nikov Theorem~\ref{thm:dolnikov-matroids} from the main theorem.

\section{Matroidal joins}
\label{sec:matroidal-joins}

In this section we introduce the notion of \emph{matroidal spaces} (see Definition~\ref{def:matroidal-space}) and more specific \emph{matroidal joins} of spaces (see Definition~\ref{def:matroidal-join}); for the latter we derive connectivity bounds (see Lemma~\ref{lem:conn-matroidal-join}). Before doing that, we recall the notion of the Bousfield-Kan bar construction of the homotopy colimit of a diagram; for details see the book by Bousfield and Kan~\cite{bousfield1972homotopy} or the expository note by Dugger~\cite{dugger2008primer}.
Namely, for a small category $\CC$ and a diagram $d \colon \CC \to \catTop$ of topological spaces, the \emph{homotopy colimit} of $d$ is defined as the quotient
\begin{equation*} \label{eq:hocolim}
    \hocolim_\CC(d) = \left(\bigsqcup_{n \ge 0} \bigsqcup_{c_n \to \dots \to c_0 \in \CC} d(c_n) \times \Delta_n \right)/\sim,
\end{equation*}
where $\Delta_n = \{(t_0, \dots, t_n) \in [0,1]^n \colon t_0 + \dots + t_n = 1\}$ is the $n$-simplex and the second disjoint sum goes over all chains of morphisms in $\CC$ of length $n+1$. Moreover, $\sim$ is a relation on the disjoint union generated by the following two relations:
\begin{itemize}
    \item for each $n \ge 1$ and $0 \le i \le n$ an element
    \begin{equation*}
         (x; t_0, \dots, t_n) \in \{c_n \to \dots \to c_0\} \times d(c_n) \times \Delta_n,
    \end{equation*}
    where $t_i = 0$,
    is in relation with
    \begin{equation*}
        \begin{cases}
            (x; t_0, \dots, \widehat{t_i}, \dots, t_n) \in \{c_n \to \dots \to \widehat{c_i} \to \dots \to c_0\} \times d(c_n) \times \Delta_{n-1} & 0 \le i < n\\
            (d(c_n \to c_{n-1})(x); t_0, \dots, t_{n-1}) \in\{c_{n-1} \to \dots \to c_0\} \times d(c_{n-1}) \times \Delta_{n-1} & i = n,
        \end{cases}
    \end{equation*}
    and
    \item for each $n \ge 1$ and $0 \le i \le n-1$ an element
    \begin{equation*}
         (x; t_0, \dots, t_n) \in \{c_n \to \dots \to c_0\} \times d(c_n) \times \Delta_n,
    \end{equation*}
    where $c_{i+1} = c_i$ and the morphism $c_{i+1} \to c_i$ is the identity,
    is in relation with
    \begin{equation*}
            (x; t_0, \dots, t_i + t_{i+1}, \dots, t_n) \in \{c_n \to \dots c_{i+1} \to c_{i-1} \to \dots \to c_0\} \times d(c_n) \times \Delta_{n-1}.
    \end{equation*}
\end{itemize}
In particular, every point in the homotopy colimit has a unique representative
\begin{equation*}
    (x; t_0, \dots, t_n) \in \{c_n \to \dots \to c_0\} \times d(c_n) \times \Delta_n,
\end{equation*}
where none of the morphisms $c_{i+1} \to c_i$ are identities and none of the $t_i$ are zero. 

For a simplicial complex $K$ we will denote by $P(K)$ the \emph{face poset} of $K$. Our convention is that the empty face is not an element of $P(K)$. We will also consider $P(K)$ as a small category which has a unique morphism $\sigma \to \tau$ whenever $\tau \subseteq \sigma$. In the remainder of the paper we will identify the matroid with its \emph{matroidal complex} (i.e., the simplicial complex of independent sets) and, as it is apparent from our definitions, we will only work with \emph{ordered} matroids (i.e., those with an ordering of the ground set).

\begin{definition} \label{def:matroidal-space}
    Let $n \ge 0$ be an integer and $M$ a matroid on the set of vertices $\{0,1, \dots, n\}$. The \emph{matroidal space} over $M$ is the homotopy colimit of a functor $d \colon P(M) \to \catTop$.
\end{definition}

By the above description, a point in $\hocolim_{P(K)}(d)$ can be uniquely presented as
\begin{equation*}
    \left(\sigma, (t_i \colon i \in \sigma), x_\sigma \right)
\end{equation*}
where $\sigma \in P(M)$ is a face of $M$, the $t_i > 0$ are real numbers with $\sum_{i \in \sigma} t_i = 1$, and $x_{\sigma} \in d(\sigma)$.

Let $G$ be a group acting on the $d(\sigma)$ such that all of the $d(\sigma \to \tau) \colon d(\sigma) \to d(\tau)$ are $G$-maps. This defines a $G$-action on $\hocolim_{P(K)}(d)$ by the rule
\[
    g \cdot (\sigma, (t_i \colon i \in \sigma), x_\sigma) = (\sigma, (t_i \colon i \in \sigma), g \cdot x_\sigma), \hspace{3mm} \text{for}~ g \in G.
\]
We will call such matroidal space \emph{$G$-matroidal}. For a real $G$-representation $R$ let $\{f_i \colon d(i) \to R\}_{0 \le i \le n}$ be a collection of $G$-maps. This defines a $G$-map
\begin{align} \label{eq:equiv-map-hocolim}
    \hocolim_{P(K)}(d) &\longrightarrow R\\
    (\sigma, (t_i \colon i \in \sigma), x_\sigma) & \longmapsto \sum_{i \in \sigma} t_i \cdot (f \circ d(\sigma \to i))(x_{\sigma}). \notag
\end{align}
We introduce the appropriate notion of a subspace:

\begin{definition} \label{def:matroidal-subspace}
    Let $L \subseteq K$ be two matroids, and $d \colon P(K) \to \catTop$, $d' \colon P(L) \to \catTop$ two diagrams. We say that $\hocolim_{P(L)}(d')$ is a \emph{matroidal subspace of $\hocolim_{P(K)}(d)$ over $L$} if there is a natural transformation between $d'$ and $d|_{P(L)}$ which is an objectwise inclusion.
\end{definition}

\begin{figure}
    \centering

\begin{tikzpicture}[
    scale=0.8,
    poly/.style={draw=black, thick},
    plane/.style={draw=black},
    dashedhull/.style={dashed},
    point/.style={circle, fill=black, inner sep=1.5pt},
    >=Latex
]

% The points
\coordinate (M1) at (0.0,0.0);
\coordinate (M12) at (2.4,0.0);
\coordinate (M01) at (-1.7,-1.2);
\coordinate (M2) at ($2*(M12)$);
\coordinate (M0) at ($2*(M01)$);

\fill[point] (M1) circle (1.5pt);
\node at ($(M1)-(0.0,0.4)$) {$1$};
    
\fill[point] (M2) circle (1.5pt);
\node at ($(M2)-(0.0,0.4)$) {$2$};
    
\fill[point] (M0) circle (1.5pt);
\node at ($(M0)-(0.0,0.4)$) {$0$};
    
\fill[point] (M12) circle (1.5pt);
\node at ($(M12)-(0.0,0.4)$) {$12$};
    
\fill[point] (M01) circle (1.5pt);
\node at ($(M01)-(0.0,0.4)$) {$01$};

% Drawing the matroid
\draw[] (M1) -- (M0);
\draw[] (M1) -- (M2);

% Written the X's
\coordinate (V) at (0.0,0.9);

\draw[dashedhull] (M1) -- ($(M1)+(V)$);
\node at ($(M1)+(V)+(0.0,0.4)$) {$X_1$};

\draw[dashedhull] (M0) -- ($(M0)+(V)$);
\node at ($(M0)+(V)+(0.0,0.4)$) {$X_0$};

\draw[dashedhull] (M2) -- ($(M2)+(V)$);
\node at ($(M2)+(V)+(0.0,0.4)$) {$X_2$};

\draw[dashedhull] (M01) -- ($(M01)+(V)$);
\node at ($(M01)+(V)+(0.0,0.4)$) {$X_0 \times X_1$};

\draw[dashedhull] (M12) -- ($(M12)+(V)$);
\node at ($(M12)+(V)+(0.0,0.4)$) {$X_1 \times X_2$};

\end{tikzpicture}

\caption{Illustration of a matroidal join $M(X_0, X_1, X_2)$ over the matroid $M$ with bases $\{0,1\},\{1,2\}$.}
    \label{fig:matroidal-join}
\end{figure}

A key property of homotopy colimits is that they preserve \emph{weak equivalences} (these are maps that induce isomorphisms on homotopy groups): if $f \colon d \Rightarrow d'$ is an objectiwise weak equivalence, then the induced map between homotopy colimits is also a weak equivalence, that is, it induces isomorphisms on homotopy groups; see the book by Bousfield and Kan \cite[Sec.~XII.4.2]{bousfield1972homotopy}. 

\begin{lemma}[\cite{bousfield1972homotopy}] \label{lem:weak-equiv}
    Let $n \ge 0$ be an integer and $M$ a matroid on the set of vertices $\{0,1, \dots, n\}$ and let $d,d' \colon P(K) \to \catTop$ be two diagrams. If there is a natural transformation $f \colon d \Rightarrow d'$ such that $f(\sigma)$ is a weak equivalence, for each $\sigma \in P(K)$, then
    \begin{equation*}
        \hocolim_{P(K)}(f) \colon \hocolim_{P(K)} (d) \longrightarrow \hocolim_{P(K)} (d')
    \end{equation*}
    is a weak equivalence. If additionally functors $d$ and $d'$ take values in CW complexes, then the map $\hocolim_{P(K)}(f)$ is a homotopy equivalence.
\end{lemma}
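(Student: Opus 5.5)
This lemma is the homotopy invariance of homotopy colimits, specialized to diagrams over the face poset $P(K)$. I would not reprove the general theory. I would organize the argument so that it reduces to two standard facts: gluing lemmas for weak equivalences, and Whitehead's theorem.

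\textbf{Setting up the induction.} Since $P(K)$ is finite, the Bousfield--Kan construction has only finitely many nondegenerate chains $\sigma_n \to \dots \to \sigma_0$. Write $\hocolim_{P(K)}(d)$ as the union of its filtration by simplicial degree:
\[
    H_m(d) = \text{image of } \bigsqcup_{n \le m}\ \bigsqcup_{\sigma_n \to \dots \to \sigma_0} d(\sigma_n) \times \Delta_n .
\]
Here only chains of non-identity morphisms are needed, by the unique-representative description. Each $H_m(d)$ is obtained from $H_{m-1}(d)$ as a pushout along the cofibration
\[
    \bigsqcup d(\sigma_m) \times \partial \Delta_m \hookrightarrow \bigsqcup d(\sigma_m) \times \Delta_m ,
\]
with the attaching map given by the face relations listed above. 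The natural transformation $f$ induces compatible maps $H_m(f) \colon H_m(d) \to H_m(d')$ commuting with these pushout squares.

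\textbf{Inductive step.} The map $f(\sigma_m) \times \mathrm{id}$ is a weak equivalence on both $d(\sigma_m) \times \Delta_m$ and $d(\sigma_m) \times \partial\Delta_m$. By induction, $H_{m-1}(f)$ is a weak equivalence. The gluing lemma for pushouts along closed cofibrations (as in Bousfield--Kan or Dugger's primer) then shows that $H_m(f)$ is a weak equivalence. Finiteness of the filtration gives the conclusion for $\hocolim_{P(K)}(f)$ after finitely many steps. This verification of the gluing lemma's cofibrancy hypotheses is the only real point. It is routine here, because $\partial\Delta_m \subset \Delta_m$ is an NDR pair and products with a space preserve this.

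\textbf{The CW case.} If $d$ and $d'$ take values in CW complexes, each piece $d(\sigma) \times \Delta_n$ is a CW complex; one may use the compactly generated product to stay in the CW category. The attaching maps are cellular after subdividing appropriately, or one notes that the homotopy colimit of CW complexes over a finite poset has the homotopy type of a CW complex. Whitehead's theorem then upgrades the weak equivalence to a homotopy equivalence. The main obstacle, insofar as there is one, is exactly this CW bookkeeping. For it I would simply cite \cite[Sec.~XII.4.2]{bousfield1972homotopy}, as the paper does.
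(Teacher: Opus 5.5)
Your argument is correct. The paper gives no proof beyond citations: the weak-equivalence statement is taken from \cite[Sec.~XII.4.2]{bousfield1972homotopy}, and the homotopy-equivalence statement is attributed to Whitehead's theorem.

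Your CW step is the same as the paper's. For the first part, however, you supply the standard argument behind the citation. You filter the bar construction by the length of chains of non-identity morphisms. This is legitimate here because in a poset every face of such a chain is again such a chain, and the degenerate chains form disjoint summands. Each stage is then a pushout along the closed cofibration $\bigsqcup d(\sigma_m)\times\partial\Delta_m\hookrightarrow\bigsqcup d(\sigma_m)\times\Delta_m$, and you induct using the gluing lemma for weak equivalences.

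What this buys you is a self-contained proof directly in $\catTop$, matching the paper's topological definition of $\hocolim$; Bousfield and Kan work with diagrams of simplicial sets. It also makes explicit a point the paper leaves implicit: Whitehead's theorem needs both homotopy colimits to have the homotopy type of CW complexes.

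Of your two justifications for that point, keep the second and drop the first. The structure maps $d(\sigma\to\tau)$ are arbitrary continuous maps between CW complexes, and no subdivision makes them cellular; you would have to replace the diagram via cellular approximation. The second justification works inductively along your filtration: an adjunction space formed along a closed cofibration from spaces of CW homotopy type again has CW homotopy type.
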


Here the stronger conclusion for CW complexes holds by the Whitehead's theorem~\cite{whitehead1949combinatorial}. While we will use the full notion of matroid spaces in later sections, in the spacial case where the functor $d$ depends only on the values on vertices of $M$ we have the following notion (see also Figure~\ref{fig:matroidal-join}):

\begin{definition} \label{def:matroidal-join}
    Let $n \ge 0$ be an integer and $M$ a matroid on the set of vertices $\{0,1, \dots, n\}$. The \emph{matroidal join} over $M$ of nonempty topological spaces $X_0, \dots, X_n$ is defined as the matroidal space
    \begin{equation*}
        M(X_0, \dots, X_n) \coloneqq \hocolim_{P(M)}(d_{X_0, \dots, X_n}),
    \end{equation*}
    where $d_{X_0, \dots, X_n} \colon P(M) \to \catTop$ is the product diagram
    \begin{equation*}
        d_{X_0, \dots, X_n}(\sigma) = \prod_{i \in \sigma} X_i, \hspace{3mm} \text{for}~\sigma \in P(M),
    \end{equation*}
    and
    \begin{equation*}
        d_{X_0, \dots, X_n}(\sigma \to \tau) \colon \prod_{i \in \sigma} X_i \longrightarrow \prod_{j \in \tau} X_j, \hspace{3mm} \text{for}~\sigma \to \tau \in P(M)
    \end{equation*}
    is the projection.
\end{definition}

\begin{example}\hfill

    \begin{enumerate}
        \item If all the $X_i$ are singletons, then $M(X_0, \dots, X_n)$ is equal to (the geometric realization of) $M$.
        \item If, for some $i$, the space $X_i$ is the discrete space of two points, while all the other $X_j$ are singletons, then $M(X_0, \dots, X_n)$ is the \emph{parallel extension} of a matroid $M$ at the vertex $i$ and is itself a matroid of the same rank as $M$; for more details, consult the book by Oxley \cite[Sec.~5.4]{oxley2006matroid}. More generally, if all the $X_i$ are discrete finite spaces, then $M(X_0,\dots,X_n)$ is an iterated parallel extension of $M$, hence is a matroid as well of the same rank as $M$.
        \item If $M$ is the full $n$-simplex $\Delta_n$, then $\Delta_n(X_0, \dots, X_n)$ is the join $X_0 * \dots * X_n$, whose points are formal convex combinations $t_0x_0 + \dots + t_n x_n$, for $(t_0, \dots, t_n) \in \Delta_n$ and $x_i \in X_i$.
    \end{enumerate}
\end{example}

In general, for a matroid $M$, the matroidal join $M(X_0, \dots, X_n)$ can alternatively be expressed as the subspace of the join: 
\begin{equation*}
    M(X_0, \dots, X_n) = \{t_0x_0 + \dots + t_n x_n \in X_0 * \dots * X_n \colon~ \{i \colon t_i > 0\} \in M\}.
\end{equation*}
The main reason why we opted for the (equivalent) definition via homotopy colimits is to quickly exhibit the following connectivity property of $M(X_0, \dots, X_n)$.
The next lemma mimics the inductive proof of Folkman's result~\cite{folkman1966homology} on the connectivity of matroids in terms of their rank; see also strengthenings by Bj\"orner \cites{bjorner1980shellable, bjorner1992}.

\begin{lemma} \label{lem:conn-matroidal-join}
    Let $n \ge 0$ be an integer and $M$ a nonempty matroid on the vertex set $\{0, \dots, n\}$. For nonempty spaces $X_0, \dots, X_n$ the matroidal join $M(X_0, \dots, X_n)$ is $(\rank(M)-2)$-connected. 
\end{lemma}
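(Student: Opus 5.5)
We argue by induction on the number of vertices $n+1$, following the deletion–contraction proof of Folkman's theorem. If $\rank(M)\le 1$ there is nothing to prove, since $(-1)$-connected means nonempty and every $X_i$ is nonempty. Suppose $\rank(M)=r\ge 2$. If every vertex is a coloop, then $M$ is the full simplex on its non-loop vertices. Loops lie in no face, so they contribute nothing to $P(M)$ and may be discarded. In this case $M(X_0,\dots,X_n)$ is an iterated join of $r$ nonempty spaces, hence $(r-2)$-connected. The connectivity of a join of nonempty spaces is standard: $\mathrm{conn}(X*Y)\ge \mathrm{conn}(X)+\mathrm{conn}(Y)+2$.

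Otherwise pick a vertex $v$, say $v=n$, that is neither a loop nor a coloop. The deletion $M\setminus v$ then still has rank $r$, and the contraction $M/v$ has rank $r-1$. Using the description of the matroidal join as a subspace of $X_0*\dots*X_n$, we decompose
\[
M(X_0,\dots,X_n)=A\cup B,
\]
where $A=(M\setminus v)(X_0,\dots,X_{n-1})$ consists of the points with $t_v=0$. The second piece is
\[
B=\{\text{points with } \{i\colon t_i>0\}\cup\{v\}\in M\}\cong (M/v)(X_0,\dots,X_{n-1}) * X_v,
\]
which is the matroidal join over the star of $v$, i.e.\ the cone $v*(M/v)$.

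Their intersection is $A\cap B=(M/v)(X_0,\dots,X_{n-1})$ on the link of $v$. By induction, $A$ is $(r-2)$-connected and $A\cap B$ is $(r-3)$-connected. The piece $B$ is a join of an $(r-3)$-connected space with a nonempty space, so it is at least $(r-2)$-connected.

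To conclude, we apply the gluing lemma: if $A$ and $B$ are $(m)$-connected and $A\cap B$ is $(m-1)$-connected, with $m=r-2$, then $A\cup B$ is $m$-connected. This follows from van Kampen plus Mayer–Vietoris and Hurewicz, or from the homotopy pushout form $A\cup_{A\cap B}B\simeq \hocolim(A\leftarrow A\cap B\to B)$.

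The main obstacle is the technical one of justifying that $A\cup B$ is a homotopy pushout, i.e.\ that $A\cap B\hookrightarrow B$ is a cofibration or the union is excisive. The cleanest route is at the level of diagrams: $P(M)$ is the union of the subposets $P(M\setminus v)$ and $P(\mathrm{st}(v))$. Each is closed downward, and their intersection is $P(\mathrm{lk}(v))$. Since the hocolim over a union of downward-closed subposets glues along the sub-hocolim as CW-type subcomplexes, we get the pushout directly. If needed, we first use Lemma~\ref{lem:weak-equiv} to replace each $X_i$ by a weakly equivalent CW complex, so that all spaces involved are CW.

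A second point to check carefully is the identification of the star piece with a join. Over $P(\mathrm{st}(v))$ the diagram is the product diagram $\sigma\mapsto\prod_{i\in\sigma}X_i$ for the cone matroid $(M/v)*\{v\}$. By the join description, its hocolim is $(M/v)(X_0,\dots,X_{n-1})*X_v$.
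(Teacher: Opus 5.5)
Your proof is correct and follows the paper's argument essentially step for step. It uses CW replacement, the full-simplex case as an iterated join, and, for a vertex lying in some but not all bases, the decomposition into the deletion and the star $\mathrm{lk}(v)(X)*X_v$ glued along the link; your van Kampen/Mayer--Vietoris/Hurewicz gluing lemma plays the role of the paper's Blakers--Massey plus long-exact-sequence step. (One wording slip: your case split should be on whether every \emph{non-loop} vertex is a coloop, which is clearly what you intend.)
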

\begin{proof}
    Without loss of generality we may assume that $M$ is loopless, i.e., $0, \dots, n \in M$.
    For each $i$ there is a weak homotopy equivalence $f_i \colon Y_i \xrightarrow{\sim} X_i$ where $Y_i$ is a CW complex called the \emph{CW approximation of $X_i$}; for details consult the book by Hatcher \cite[Pg.~352-353]{hatcher2002algebraic}. Maps $f_i$ induce a natural transformation $f \colon d_{Y_0, \dots, Y_n} \Rightarrow d_{X_0, \dots, X_n}$ between the two diagrams over $P(M)$ used in the construction of spaces $M(X_0, \dots, X_n)$ and $M(Y_0, \dots, Y_n)$ (see Definition~\ref{def:matroidal-join}).
    By Lemma~\ref{lem:weak-equiv} the two matroidal joins are weakly equivalent,
    so it is enough to prove the lemma assuming $X_i$ are CW complexes.
    
    The rest of the proof is carried out by induction on $n+k \ge 1$, where $k = \rank(M)$. For the (extended) base case $k = 1$ the matroid $M$ is zero-dimensional, so $M(X_0, \dots, X_n)$ is a disjoint union of the nonempty spaces $X_i$, i.e., it is $(-1)$-connected.

    For the inductive step, suppose that $n \ge 1$ and $k \ge 2$.
    If $k = n+1$, that is, if $M$ is the full $n$-simplex, then $M(X_0, \dots, X_n) = X_0 * \dots * X_n$ is the join of nonempty spaces and hence $(n-1)$-connected, which follows by the work of Whitehead~\cite{whitehead1956homotopy} on connectivity of the joins. 
    On the other hand, if $k \le n$, then there exists a vertex of $M$ which is not in every maximal face (basis). Without loss of generality, let that vertex be $n$. Then, the deletion matroid $M\setminus n$ consisting of faces of $M$ not having $n$ as a vertex is also a matroid of rank $k$, so by the induction hypothesis the matroidal join $(M\setminus n)(X_0, \dots, X_{n-1})$ is $(k-2)$-connected. Let $\link_M(n) = \{\sigma \in M\setminus n \colon \sigma \cup \{n\} \in M\}$ denote the link in $M$ of the vertex $n$, which is itself a matroid of rank $k-1$. Let $M_n(X_0, \dots, X_n)$ denote the homotopy colimit of the restriction of the diagram $d$ from Definition~\ref{def:matroidal-join} to the subcategory $P(\link_M(n)) \subseteq P(M)$. By construction, this space is the $\link_M(n)$-matroidal join of a tuple of spaces $(X_i \colon i \in \link_M(n))$, so by the induction hypothesis is $(k-3)$-connected. Next, we observe that there is a pushout diagram
    \begin{equation*}
        \begin{tikzcd}
            M_n(X_0, \dots, X_n) \arrow[r] \arrow[d] & (M \setminus n)(X_0, \dots, X_{n-1}) \arrow[d] \\
            M_n(X_0, \dots, X_n) * X_n \arrow[r] & M(X_0, \dots, X_n).
        \end{tikzcd}
    \end{equation*}
    Moreover, since all $X_i$ are CW complexes, the two arrows in the diagram from the top-left spot are cofibrations.
    Therefore, by the Blakers-Massey theorem~\cite{blakers1951homotopy} it follows that the horizontal maps induce isomorphisms in homotopy groups of pairs
    \begin{align*}
        \pi_\ell&(M_n(X_0, \dots, X_n) * X_n, M_n(X_0, \dots, X_n)) \xrightarrow{\cong} \pi_\ell(M(X_0, \dots, X_n) , (M \setminus n)(X_0, \dots, X_{n-1})),
    \end{align*}
    for $0 \le \ell \le k-2$.
    The claim of the lemma now follows from the long exact sequence of a pair in homotopy groups and the connectivity of the spaces.
\end{proof}

\section{Nonexistence of equivariant maps}
\label{sec:non-existence-equiv}

In this section we state a theorem on nonexistence of equivariant maps from the skeleta of Stiefel manifolds into spheres (see Theorem~\ref{thm:equiv-map}), which essentially follows from previous works of Fadell and Husseini~\cite{fadell1988ideal}, Chan, Chen, Frick, and Hull~\cite{chan2020borsuk}, and the author and Sober\'on~\cite{sadovek2026complex}.

Before stating the result we fix notation.
Let $\FF$ be a field $\R$ or $\C$ and denote by 
\[
    \stiefel{n}{d} = \{(v_1, \dots, v_n) \in S(\FF^d)^n \colon \langle v_i, v_j \rangle_\FF = 0 ~ \text{for all}~i\neq j\}
\]
the Stiefel manifold of orthonormal $n$-frames in $\FF^d$ with respect to the $\FF$-inner product, where $S(\FF^d)$ is the unit sphere in $\FF^d$. We will use multiplicative notation for $\Z_2 = \{-1,1\}$ and we endow $\stiefel{n}{d}$ with a $S(\FF)^n$-action
\begin{equation*}
    (g_1, \dots, g_n) \cdot (v_1, \dots, v_n) = (g_1 v_1, \dots, g_n v_n),
\end{equation*}
for $(g, \dots, g_n) \in S(\FF)^n$ and $(v_1, \dots, v_n) \in \stiefel{n}{d}$. We will also consider an action of the subgroup
$\Z_2^n = S(\R)^n \subseteq S(\FF)^n$.

As already mentioned, we will need the following result on the nonexistence of equivariant maps from Stiefel manifolds, which in the real case is due to Chan, Chen, Frick, and Hull \cite[Thm~1.1]{chan2020borsuk} and is implicit in the work of Fadell and Husseini~\cite{fadell1988ideal}. In the complex case it is a result of the author and Sober\'on \cite[Rem.~5.1]{sadovek2026complex}. These results can be put together to claim the nonexistence of $\Z_2^n$-equivariant maps
\begin{equation} \label{eq:equiv-map}
    \stiefel{n}{d} \longrightarrow S(\FF^{d-1} \oplus \dots \oplus \FF^{d-n}),
\end{equation}
where the codomain is the unit sphere in the $\FF$-vector space $\FF^{d-1} \oplus \dots \oplus \FF^{d-n}$ with the product action.
In what follows, $\sk_k(K)$ denotes the $k$-skeleton of a cellular complex $K$ and $K$ is a called a $G$-CW complex, for a given finite group $G$, if $G$ acts by cellular maps on $K$.

\begin{theorem} \label{thm:equiv-map}
    Let $0 \le n  < d$ and $0 \le k_1 \le d-1, \dots, 0 \le k_n \le d-n$ be integers. Suppose that $\stiefel{n}{d}$ is endowed with a structure of a $\Z_2^n$-CW complex. Then, there does not exist a $\Z_2^n$-equivariant map
    \begin{equation*}
        \sk_{(k_1 + \dots + k_n)\cdot \dim_\R(\FF)}(\stiefel{n}{d}) \longrightarrow S(\FF^{k_1} \oplus \dots \oplus \FF^{k_n}),
    \end{equation*}
    where $\Z_2^n$ has a product antipodal action on $\FF^{k_1} \oplus \dots \oplus \FF^{k_n}$.
\end{theorem}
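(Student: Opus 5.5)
The plan is to turn the problem into a statement about a single characteristic class on the orbit space. The action of $\Z_2^n$ on $\stiefel{n}{d}$ is free, since every $v_i$ is a unit vector. Put $N = (k_1+\dots+k_n)\dim_\R(\FF)$ and let $W = \FF^{k_1}\oplus\dots\oplus\FF^{k_n}$ carry the product antipodal action, so $\dim_\R W = N$ and $S(W)$ is an $(N-1)$-sphere. Because the action is free and cellular, the quotient $B = \stiefel{n}{d}/\Z_2^n$ is a CW complex, and $\sk_N(\stiefel{n}{d})/\Z_2^n = \sk_N(B)$.

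Form the real vector bundle $\xi = \stiefel{n}{d}\times_{\Z_2^n} W \to B$. A $\Z_2^n$-equivariant map $\sk_N(\stiefel{n}{d}) \to S(W)$ is the same as a nowhere-vanishing section of $\xi|_{\sk_N(B)}$. Such a section forces the mod $2$ Euler class, i.e. the top Stiefel--Whitney class $w_N(\xi)$, to restrict to zero in $H^N(\sk_N(B);\FF_2)$. In cellular cohomology, restriction from $B$ to its $N$-skeleton is injective in degree $N$: the cochains agree up to degree $N$, and $\ker\delta$ in degree $N$ can only get larger on the skeleton. Hence $w_N(\xi)=0$ already in $H^N(B;\FF_2)$. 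So it suffices to show $w_N(\xi)\neq 0$ in $H^*(B;\FF_2)$, which is a statement about the whole Stiefel manifold.

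Next, compute $w_N(\xi)$. Write $t_1,\dots,t_n \in H^1(B;\FF_2)$ for the classes of the line bundles associated with the sign characters of the factors of $\Z_2^n$. Since $\FF^{k_i}$, viewed as a real representation, is $k_i\dim_\R(\FF)$ copies of the $i$-th sign character, the Whitney sum formula gives
\[
w_N(\xi) = t_1^{k_1\dim_\R(\FF)}\cdots t_n^{k_n\dim_\R(\FF)}.
\]
Because $k_i\le d-i$, this monomial divides $\prod_i t_i^{(d-i)\dim_\R(\FF)}$. If the larger monomial is nonzero in $H^*(B;\FF_2)$, then so is every divisor of it. So everything reduces to the extremal case $k_i = d-i$, which is exactly the setting of~\eqref{eq:equiv-map}.

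The main obstacle is this extremal nonvanishing. It cannot be read off directly from the statement of~\eqref{eq:equiv-map}, because nonexistence of a map does not by itself give a nonzero Euler class. However, the proofs in the cited works establish precisely this class. In the real case, Fadell--Husseini and Chan--Chen--Frick--Hull compute
\[
H^*(V_n(\R^d)/\Z_2^n;\FF_2) \cong \FF_2[t_1,\dots,t_n]/\bigl(h_{d-i+1}(t_1,\dots,t_i) : 1\le i\le n\bigr),
\]
where $h_m$ denotes the complete homogeneous symmetric polynomial of degree $m$, and they show that $\prod_i t_i^{d-i}$ survives. In the complex case I would use the analogous computation for the restricted $\Z_2^n$-action from the author and Sober\'on, which shows $\prod_i t_i^{2(d-i)}\neq 0$. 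I would re-derive or quote the relevant line of those arguments, namely the Gysin/Leray--Hirsch induction along the fibrations $V_{i}(\FF^d)\to V_{i-1}(\FF^d)$ with sphere fibres. That finishes the proof by contradiction.
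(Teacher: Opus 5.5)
Your argument is correct, but it takes a different route from the paper.

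\textbf{The paper's route.} It works purely with maps. Lemma~\ref{lem:sk-map} says that on a free $G$-CW complex, a $G$-map $\sk_{m-1}(K)\to S(V)$ yields a $G$-map $K\to S(V\oplus\R)$, because $S(V)\hookrightarrow S(V\oplus\R)$ is null-homotopic. The paper applies this repeatedly, raising the skeleton dimension and the target dimension together until $k_i=d-i$. It then quotes the nonexistence of \eqref{eq:equiv-map} as a black box.

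\textbf{Your route.} You pass to the orbit space $B$. A map on $\sk_N$ kills the mod~$2$ Euler class $w_N(\xi)=\prod_i t_i^{k_i\dim_\R(\FF)}$ on $\sk_N(B)$. By injectivity of $H^N(B;\FF_2)\to H^N(\sk_N(B);\FF_2)$ it then vanishes on $B$. Divisibility of monomials reduces everything to a single nonvanishing statement.

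\textbf{What each buys.} The paper's reduction needs only the bare nonexistence result, not the cohomology ring. Yours needs the stronger cohomological input, as you rightly note, but it is uniform in the skeleton. This matters for $\FF=\C$. There $\dim_\R V_n(\C^d)=2\sum_i(d-i)+n$, so the paper's padding only produces a map on the $2\sum_i(d-i)$-skeleton. The black box is therefore really needed in its skeletal (index) form, which the paper's remark on the Fadell--Husseini index alludes to and your argument supplies directly.

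\textbf{A cheaper input.} The cohomological input is easier than your citations suggest. The map $\stiefel{i}{d}/\Z_2^i\to\stiefel{i-1}{d}/\Z_2^{i-1}$ is the real projectivization of the bundle with fibre $(\linspan_\FF\{v_1,\dots,v_{i-1}\})^{\perp_\FF}$ over $[v_1,\dots,v_{i-1}]$. This bundle has real rank $\dim_\R(\FF)(d-i+1)$, and $t_i$ is its tautological class. So on orbit spaces the fibres are real projective spaces, not spheres. Leray--Hirsch then shows that the monomials $\prod_i t_i^{a_i}$ with $0\le a_i\le \dim_\R(\FF)(d-i+1)-1$ form an $\FF_2$-basis of $H^*(\stiefel{n}{d}/\Z_2^n;\FF_2)$. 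Each $\prod_i t_i^{k_i\dim_\R(\FF)}$ is itself a basis element. You can therefore drop both the divisibility step and the appeal to the real and complex computations in the literature.
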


The proofs in \cites{fadell1988ideal, sadovek2026complex} via Fadell-Husseini index computations can be leveraged directly in the case $k_1 = \dots = k_n = d-n$ to obtain Theorem~\ref{thm:equiv-map}, since the index is created already by the $(n(d-n)\dim_\R(\FF))$-skeleton. We provide an elementary reduction of the full Theorem~\ref{thm:equiv-map} to the nonexistence result of a $\Z_2^n$-map \eqref{eq:equiv-map}.
We will need the following folklore lemma.

\begin{lemma} \label{lem:sk-map}
    Let $G$ be a finite group and $K$ a free $G$-CW complex of dimension $n \ge 1$. Suppose that, for a real $G$-representation $V$, there exists a $G$-equivariant map
    \begin{equation*}
        \sk_{n-1}(K) \longrightarrow S(V).
    \end{equation*}
    Then, for any 1-dimensional $G$-representation $\R$ there exists a $G$-equivariant map
    \begin{equation*}
        K \longrightarrow S(V \oplus \R).
    \end{equation*}
\end{lemma}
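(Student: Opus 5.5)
We build the map on $K$ as a join-type extension. Let $f\colon \sk_{n-1}(K)\to S(V)$ be the given $G$-map and $\R_\chi$ the $1$-dimensional representation, with $G$ acting through a character $\chi\colon G\to\{\pm1\}$. We construct a $G$-map $F\colon K\to V\oplus\R_\chi$ that avoids the origin; composing with radial projection then gives the desired map to $S(V\oplus\R_\chi)$. On $\sk_{n-1}(K)$ we set $F=(f,0)$. It remains to extend over the $n$-cells, keeping the values nonzero and the extension equivariant.

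Since $K$ is a free $G$-CW complex, the $n$-cells come in free $G$-orbits. We choose one representative cell $e$ from each orbit, with characteristic map $\Phi_e\colon D^n\to K$. The attaching map gives $f\circ\Phi_e|_{S^{n-1}}\colon S^{n-1}\to S(V)$. We extend this over the disk as the upper hemisphere of the suspension:
\[
F(\Phi_e(sx))=\bigl(s\, f(\Phi_e(x)),\,\sqrt{1-s^2}\bigr),\qquad x\in S^{n-1},\ s\in[0,1].
\]
This value is a unit vector, so it is nonzero, and on the boundary $s=1$ it agrees with $(f,0)$. For the cell $g e$, with characteristic map $g\circ\Phi_e$, we define $F$ by the formula forced by equivariance:
\[
F(g\Phi_e(y))=g\cdot F(\Phi_e(y)).
\]
Concretely, this is $\bigl(s\,g f(\Phi_e(x)),\ \chi(g)\sqrt{1-s^2}\bigr)$. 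Because $f$ is equivariant, on the boundary this equals $(f(g\Phi_e(x)),0)$, so it matches the map already defined on $\sk_{n-1}(K)$.

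Two points need checking.

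1. \emph{The map is well defined.} The interiors of distinct cells are disjoint, and freeness means $ge=e$ forces $g=1$. So every open $n$-cell is $ge$ for a unique pair $(g,e)$, and each point receives exactly one prescription.
2. \emph{The map is continuous.} Continuity follows from the weak topology on a CW complex, since $F$ is continuous on each closed cell.

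Equivariance holds by construction, and $F$ has no zeros. Normalizing gives the $G$-map $K\to S(V\oplus\R_\chi)$.

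The only subtle step is the well-definedness in point 1. Freeness of the action on cells is exactly what removes any compatibility constraint on the choice of hemisphere. The sign $\chi(g)$ on the new coordinate is what allows cells in the same orbit to go to opposite hemispheres, which is consistent with an arbitrary $1$-dimensional representation. Note also that we never need $f$ to extend over $K$ into $S(V)$ itself, so no obstruction theory enters.
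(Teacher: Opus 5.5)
Your proof is correct and takes the same route as the paper. The paper also extends over one $n$-cell per free orbit and then translates equivariantly, using that $S(V)\hookrightarrow S(V\oplus\R)$ is null-homotopic. Your cone-to-the-pole formula $\bigl(s\,f(\Phi_e(x)),\sqrt{1-s^2}\bigr)$ writes that null-homotopy out explicitly, and your orbit-wise translation (with the sign $\chi(g)$ on the new coordinate) spells out the step the paper leaves to ``elementary equivariant obstruction theory.''
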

\begin{proof}
    Let $f \colon \sk_{n-1}(K) \to S(V)$ be a $G$-map.
    Since $K$ is a free $G$-CW complex, by elementary equivariant obstruction theory, it suffices to show that for an arbitrary $n$-cell $e \subseteq K$ the map
    \begin{equation*}
        g_e \colon \partial e \xrightarrow{~f|_{\partial e}~} S(V) \hookrightarrow S(V \oplus \R)
    \end{equation*}
    can be extended to a map $e \to S(V \oplus \R)$. This is indeed the case, as $g_e$ factors through a homotopically trivial map $S(V) \hookrightarrow S(V \oplus \R)$ and is therefore homotopically trivial.
\end{proof}

\begin{proof}[Proof of Theorem~\ref{thm:equiv-map}]
    Suppose that there exists a $\Z_2^n$-map
    \[
        \sk_{(k_1 + \dots + k_n)\cdot \dim_\R(\FF)}(\stiefel{n}{d}) \longrightarrow S(\FF^{k_1} \oplus \dots \oplus \FF^{k_n}).
    \]
    By successively applying Lemma~\ref{lem:sk-map} $( \dim_\R\FF \cdot \sum_{i=1}^n(d-i -k_i))$-many times, we conclude that there exists a $\Z_2^n$-map \eqref{eq:equiv-map}.
    Here we note for $\FF=\C$, any $\Z_2$-representation $\C$ splits as a direct sum of two 1-dimensional real representations, so the lemma may indeed be applied in this case as well.
    As already explained above, equivariant map \eqref{eq:equiv-map} does not exists~\cites{chan2020borsuk, fadell1988ideal, sadovek2026complex}.
\end{proof}

\section{An auxiliary matroidal space}
\label{sec:aux-matr-space}

In this section we define a particular matroidal space $X$ over $M$ and exhibit its properties, which will be crucial for the proof of Theorem~\ref{thm:main} in Section~\ref{sec:proof-main}.

Let $0 \le r \le k < d$ be integers. We set further notation. As before, let $S(\FF) = \{z \in \FF \colon \|z\| = 1\}$ denote the unit sphere and
\begin{equation*}
    C_{d-k}(\FF) \coloneqq \left( S(\FF) \cup \{0\} \right)^{d-k} \setminus \{(0, \dots, 0\}
\end{equation*}
be the set of nonzero vectors $z=(z_1, \dots, z_{d-k}) \in \FF^{d-k}$ such that $\|z_i\|$ is either zero or one, for each $i=1, \dots, d-k$. In the real case, the set $C_{d-k}(\R)$ can be identified with the face poset of the $(d-k)$-dimensional cross polytope $\{(x_1, \dots, x_{d-k}) \in \R^{d-k} \colon \sum_{i=1}^{d-k} |x_i| \le 1\}$. We endow $C_{d-k}(\FF)$ with the inverse-product $S(\FF)^{d-k}$-action
\begin{equation*}
    (g_1, \dots, g_{d-k}) \cdot (z_1, \dots, z_{d-k}) = (g_1^{-1} \cdot z_1, \dots, g_{d-k}^{-1} \cdot z_{d-k}),
\end{equation*}
for $(g_1, \dots, g_{d-k}) \in S(\FF)^{d-k}$ and $(z_1, \dots, z_{d-k}) \in C_{d-k}(\FF)$.

The key object for us is the matroidal space (recall Definition~\ref{def:matroidal-space}) over $M$
\begin{equation} \label{eq:def-X}
    X = X(M, \F) = \hocolim_{P(M)}(d(M, \F)),
\end{equation}
where the diagram
\begin{equation*}
    d = d(M, \F) \colon P(M) \longrightarrow \catTop
\end{equation*}
is defined as follows. On objects $\{i_1 < \dots < i_m \} \in P(M)$ it takes the form 
\begin{equation} \label{eq:d-value}
    d(\{i_\ell\}_{\ell = 1}^m) =  \Big\{(z_\ell)_{\ell = 1}^m \in (C_{d-k}(\FF))^m \colon \{i_\ell\}_{\ell = 1}^m ~ \text{and} ~ (z_\ell)_{\ell = 1}^m ~ \text{satisfy}~ (c_1) \Big\}
\end{equation}
and the morphisms are induced by the projections $(C_{d-k}(\FF))^m \to (C_{d-k}(\FF))^{m-1}$. The condition $(c_1)$ is:
\begin{enumerate}
    \item[{($c_1$)}] any $d-k$ affine $\FF$-dependencies of the form
\begin{equation*}
    \sum_{\ell=1}^m a_\ell z_{j,\ell} = 0 \in \FF,~ \sum_{\ell=1}^m a_\ell z_{j,\ell} q_\ell = 0 \in \FF^d, \hspace{3mm} \text{for}~ j=1, \dots, d-k,
\end{equation*}
where $a_1, \dots, a_m  \ge 0$ are real numbers and $q_1 \in F_{i_1}, \dots ,q_m \in F_{i_m}$ are points, are necessarily trivial, that is, $a_1, \dots, a_m = 0$.
\end{enumerate}
We extend the $S(\FF)^{d-k}$-action on $C_{d-k}(\FF)$ to the diagonal action on the product $(C_{d-k}(\FF))^m$ and notice that $d(\{i_\ell\}_{\ell = 1}^m)$ is an $S(\FF)^{d-k}$-invariant space: if $(z_\ell)_{\ell = 1}^m$ satisfies ($c_1$) then $(g \cdot z_\ell)_{\ell = 1}^m$ satisfies it as well, for any $g=(g_1, \dots, g_{d-k}) \in S(\FF)^{d-k}$, since the multiplicative term $g_j^{-1}$ may be canceled from the $j$'th affine dependency in ($c_1$). This defines an $S(\FF)^{d-k}$-matroidal space structure on $X$.

Let us denote by $\hat\F \coloneqq \F \times \{1\}$ the set $\F$ lifted to the ``height one'' space $\FF^d \times \{1\} \subseteq \FF^{d+1}$. For a point $q \in F$ we will also write $\hat q \coloneqq (q,1)$ for short. We for a vector $z\in \FF^{d+1}$ and $\hat F \in \hat \F$ we will write $\langle z, \hat F \rangle  > 0$ for the fact that
\begin{equation*}
    \re \langle z, \hat q \rangle_\FF = \langle z, \hat q \rangle_\R > 0, \hspace{3mm} \text{for all}~\hat q \in \hat F.
\end{equation*}
For an orthonormal $(d-k)$-frame $v = (v_1, \dots, v_{d-k}) \in \stiefel{d-k}{d+1}$ and a convex set $\hat{F}_i \in \hat \F$ we have that
\begin{equation*}
    \hat F_i \cap (\linspan_\FF\{v_1, \dots, v_{d-k}\})^{\perp_{\FF}} \neq \emptyset
\end{equation*}
if and only if the $\FF$-orthogonal projection $\FF^{d+1} \to \linspan_\FF\{v_1, \dots, v_{d-k}\}$ does not map $\hat F_i$ to the origin. This is further equivalent to the fact that for every $z \in C_{d-k}(\FF)$ we have $\langle z_1 v_1 + \dots + z_{d-k} v_{d-k}, \hat F_i \rangle = 0$. To each $v \in \stiefel{d-k}{d+1}$ we associate a subspace $S(v) \subseteq \{0, \dots, n\}$ of the vertex set of $M$ defined as
\begin{align} \label{eq:S(v)}
    S(v) \coloneqq &\big\{i \in \{0, \dots, n\} \colon~  \hat F_i \cap (\linspan_\FF\{v_1, \dots, v_{d-k}\})^{\perp_{\FF}} = \emptyset \big\} \notag \\
    = & \big\{i \in \{0, \dots, n\} \colon~  (\exists z \in C_{d-k}(\FF))~\langle z_1 v_1 + \dots + z_{d-k} v_{d-k}, \hat F_i \rangle > 0 \big\}. 
\end{align}
The sets $S(v)$ are $S(\FF)^{d-k}$-invariant in the sense that $S(g \cdot v) = S(v)$, for any $g \in S(\FF)^{d-k}$, and the induced subcomplex $M[S(v)] \subseteq M$ on the set of vertices $S(v)$ is also a matroid. Moreover, since the condition defining $S(v)$ is open in variable $v \in \stiefel{d-k}{d+1}$, we observe that $S(v)$ is \emph{locally inclusion-monotone} in the sense that the function
\begin{equation*}
    S \colon V_{d-k}(\FF^{d+1}) \longrightarrow 2^{[n]},~v \longmapsto S(v)
\end{equation*}
has the property that every $v$ has a neighborhood $v \in N_v \subseteq V_{d-k}(\FF^{d+1})$ such that $S(v) \subseteq S(u)$ for all $u \in N_v$.
We define a matroidal join (recall Definition~\ref{def:matroidal-join}) over the matroid $M[S(v)]$ as
\begin{equation} \label{eq:X(v)}
    X(v) \coloneqq M[S(v)] \left( \{z \in C_{d-k}(\FF) \colon \langle z_1 v_1 + \dots + z_{d-k} v_{d-k}, \hat F_i \rangle > 0\} \colon i \in S(v)\right).
\end{equation}
The following is the key observation:

\begin{lemma} \label{lem:X(v)}
    Let $v \in \stiefel{d-k}{d+1}$.
    The matroidal join $X(v)$ is $(\rank(M[S(v)]) - 2)$-connected and is a matroidal subspace of $X$ over $M[S(v)]$ in the sense of Definition~\ref{def:matroidal-subspace}. Furthermore, there exists a neighborhood $N_v \subseteq \stiefel{d-k}{d+1}$ of $v$ such that $X(v)$ is a matroidal subspace of $X(u)$, for every $u \in N_v$.
\end{lemma}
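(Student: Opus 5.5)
The lemma has three assertions, and I would prove them in order.

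\emph{Connectivity.} For each $i \in S(v)$ set
\[
A_i(v) = \{z \in C_{d-k}(\FF) \colon \langle z_1 v_1 + \dots + z_{d-k} v_{d-k}, \hat F_i \rangle > 0\}.
\]
By the very definition \eqref{eq:S(v)}, $i \in S(v)$ means exactly that $A_i(v)$ is nonempty. So $X(v)$ is a matroidal join of nonempty spaces over the matroid $M[S(v)]$. If $S(v) = \emptyset$, the bound $(\rank - 2) = -2$ is vacuous. Otherwise $M[S(v)]$ is a nonempty matroid, and Lemma~\ref{lem:conn-matroidal-join} gives $(\rank(M[S(v)])-2)$-connectivity directly.

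\emph{Matroidal subspace of $X$.} By Definition~\ref{def:matroidal-subspace}, I need to check two things for each face $\sigma = \{i_1 < \dots < i_m\}$ of $M[S(v)]$:
\begin{itemize}
\item the product $\prod_{\ell} A_{i_\ell}(v)$ is contained in $d(\sigma)$ from \eqref{eq:d-value};
\item these inclusions commute with the projections.
\end{itemize}
Commutation is clear, since both diagrams use coordinate projections. For the containment, take $(z_\ell)_{\ell} \in \prod_\ell A_{i_\ell}(v)$. Suppose we have affine dependencies as in $(c_1)$ with $a_\ell \ge 0$ and $q_\ell \in F_{i_\ell}$. The two equations for index $j$ combine into the single vector identity
\[
\sum_\ell a_\ell z_{j,\ell} \hat q_\ell = 0 \in \FF^{d+1}.
\]
Take the $\FF$-inner product with $v_j$. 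Since $\langle\cdot,\cdot\rangle_\FF$ is conjugate-linear in one slot, I will fix the convention so that the scalar $z_{j,\ell}$ pulls out as $\bar z_{j,\ell}$ or $z_{j,\ell}$ consistently; this matches the inverse action on $C_{d-k}(\FF)$. Summing over $j$ and taking real parts then gives
\[
0 = \sum_\ell a_\ell \,\re\Big\langle \sum_j z_{j,\ell} v_j, \hat q_\ell \Big\rangle.
\]
Each real part here is strictly positive, because $z_\ell \in A_{i_\ell}(v)$ and $\hat q_\ell \in \hat F_{i_\ell}$. Since all $a_\ell \ge 0$, every $a_\ell$ must vanish, which is exactly $(c_1)$.

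The main obstacle is keeping the conjugation convention consistent between the definition of $\langle z, \hat F\rangle > 0$ and the dependencies in $(c_1)$. If a conjugate appears, I will replace $z$ by $\bar z$, which is harmless because $|z_j| \in \{0,1\}$ is preserved.

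\emph{Local monotonicity.} Both $\hat F_i$ and the sets in $C_{d-k}(\FF)$ need compactness: $\hat F_i$ is compact by assumption, and $C_{d-k}(\FF)$ is a finite union of products of circles or points. This compactness makes the condition $\langle \sum_j z_j u_j, \hat F_i\rangle > 0$ open jointly in $(z,u)$.

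First, local inclusion-monotonicity of $S$ gives a neighborhood on which $S(v) \subseteq S(u)$, so $M[S(v)] \subseteq M[S(u)]$ as matroids.

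Second, I need a neighborhood $N_v$ such that $A_i(v) \subseteq A_i(u)$ for every $u \in N_v$ and every $i$. The minimum of $\re\langle \sum_j z_j v_j, \hat q\rangle$ over $\hat q \in \hat F_i$ depends continuously on $(z,v)$. The difficulty is that $A_i(v)$ is open and not compact, so a uniform neighborhood is not automatic.

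To handle this, I would first try to exploit the fact that $C_{d-k}(\FF)$ decomposes into finitely many strata of the form $S(\FF)^{J}$, where $J$ records which coordinates are nonzero. On each stratum, positivity is a condition on $(z,u)$. I would then shrink $N_v$ in one of two ways:
\begin{itemize}
\item by a compactness argument applied to the closure of $A_i(v)$ within each stratum, if that closure still satisfies the positivity condition;
\item otherwise, by replacing $A_i(v)$, up to an $S(\FF)^{d-k}$-equivariant homotopy equivalence, with a compact sub-piece, so that the argument goes through.
\end{itemize}
Intersecting the finitely many resulting neighborhoods over $i$ and strata with the monotonicity neighborhood of $S$ gives $N_v$. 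The inclusions $A_i(v) \hookrightarrow A_i(u)$ then assemble into a natural objectwise inclusion of product diagrams, so $X(v)$ is a matroidal subspace of $X(u)$.
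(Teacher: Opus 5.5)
\textbf{There is a genuine gap in the third assertion; in fact that assertion is false for $\FF=\C$.}

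Your first two parts coincide with the paper's proof. Connectivity comes from Lemma~\ref{lem:conn-matroidal-join}, since the vertex spaces $A_i(v)$ are nonempty precisely because $i\in S(v)$. The implication $(c_2)\Rightarrow(c_1)$ comes from pairing the dependency with $v_j$, summing over $j$ and taking real parts. Your worry about conjugation is justified: for $\FF=\C$ the paper's display pulls $z_{j,\ell}$ out of both slots of $\langle\cdot,\cdot\rangle_\FF$, which no sesquilinear convention allows. Your fix, conjugating $z$ in the definition of $A_i(v)$ (which leaves $S(v)$ unchanged), repairs this.

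The problem is the third assertion. You correctly reduce it to finding $N_v$ with $A_i(v)\subseteq A_i(u)$ for all $u\in N_v$. You also correctly note that joint openness only gives this on compact subsets of $A_i(v)$. Neither of your remedies closes the gap:
\begin{itemize}
\item The first remedy (closure within each stratum) can never apply when $\FF=\C$. Each stratum $S(\C)^J$ is connected and $A_i(v)\cap(-A_i(v))=\emptyset$. So a nonempty $A_i(v)\cap S(\C)^J$ is a proper open subset, and its closure always contains points $z$ with $\min_{\hat q\in\hat F_i}\re\langle\sum_j z_jv_j,\hat q\rangle_\FF=0$.
\item The second remedy (a compact sub-piece) replaces $X(v)$ by a different space, so it proves a different statement.
\end{itemize}

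No argument can close this for $\FF=\C$, because the assertion is false there. Here ``matroidal subspace'' means inclusion inside $X$, which is how it is used in Section~\ref{sec:proof-main}.
\begin{itemize}
\item Take $d-k=1$, $F_i=\{q\}$ a point with $\{i\}\in M$, and $v=(v_1)$ with $\langle v_1,\hat q\rangle_\C\neq 0$. Then $A_i(v)$ is an open half-circle in $C_1(\C)=S(\C)$.
\item For $\lambda\in S(\C)\setminus\{1\}$, the frame $u=(\lambda v_1)$ tends to $v$ as $\lambda\to 1$.
\item In either convention for $\langle\cdot,\cdot\rangle_\C$ one gets $A_i(u)=\lambda^{-1}A_i(v)$, an open half-circle that does not contain $A_i(v)$.
\item So the required vertex-level inclusion $A_i(v)\subseteq A_i(u)$ fails, and no $N_v$ exists.
\end{itemize}

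For $\FF=\R$ the difficulty disappears. $C_{d-k}(\R)=\{-1,0,1\}^{d-k}\setminus\{0\}$ is finite, so each $A_i(v)$ is finite. Joint openness applied to the finitely many pairs $(i,z)$ with $z\in A_i(v)$ then gives $N_v$ at once. This is your first remedy, and it is all that the paper's one-line appeal to the openness of $(c_2)$ establishes; the paper does not address the complex case either.

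For $\FF=\C$ only a compact version is true. For nonempty compact $K_i\subseteq A_i(v)$, $i\in S(v)$, there is a neighborhood $N$ of $v$ with $K_i\subseteq A_i(u)$ for all $u\in N$. Hence the matroidal join of the $K_i$ over $M[S(v)]$ is a matroidal subspace of $X(u)$. Any use of the lemma would have to go through this weaker statement.
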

\begin{proof}
    The connectivity claim follows from Lemma~\ref{lem:conn-matroidal-join}. To show that $X(v)$ is a matroidal subspace of $X$ over $M[S(v)]$ we need to prove that for each $\{i_1, \dots, i_m\} \in M[S(v)]$ 
    \begin{equation*}
        \Big\{(z_\ell)_{\ell = 1}^m \in (C_{d-k}(\FF))^m \colon \{i_\ell\}_{\ell = 1}^m ~ \text{and} ~ (z_\ell)_{\ell = 1}^m ~ \text{satisfy}~ (c_2) \Big\} \subseteq d(\{i_1, \dots, i_m\}),
    \end{equation*}
    where the latter is the space from \eqref{eq:d-value} used to define $X$ and the condition $(c_2)$ is:
    \begin{itemize}
        \item[{($c_2$)}] $\langle z_{1,\ell} v_1 + \dots + z_{d-k,\ell}v_{d-k}, \hat F_{i_\ell} \rangle > 0$, for each $\ell = 1, \dots, m$.
    \end{itemize}
    In other words, we need to show that ($c_2$) implies ($c_1$). To this end, let $a_1, \dots, a_\ell \ge 0$ be real numbers and $\hat q_1 \in \hat F_{i_1}, \dots, \hat q_m \in \hat F_{i_m}$ points such that
    \begin{equation*}
        \sum_{\ell=1}^m a_\ell z_{j,\ell} \hat q_\ell = 0 \in \FF^{d+1}, \hspace{3mm} \text{for}~ j=1, \dots, d-k.
    \end{equation*}
    Taking the inner product of the second equality with $\langle v_j, \cdot \rangle_\FF$, summing over $j=1, \dots, d-k$, and taking the real part, we obtain
    \begin{align*}
        0 = \re \left(\sum_{j=1}^{d-k} \sum_{\ell=1}^m a_\ell z_{j,\ell} \langle v_j, \hat q_\ell \rangle_\FF \right) = \sum_{\ell=1}^m a_\ell \sum_{j=1}^{d-k} \re \langle z_{j,\ell} v_j, \hat q_\ell \rangle_\FF.
    \end{align*}
    Assuming ($c_2$), the second sum on right hand side (the sum over $j$) is strictly positive for each $\ell$; therefore, all the $a_\ell$ are zero, proving condition ($c_1$).

    Finally, the existence of the neighborhood $N_v$ follows from the fact that $S(v)$ is locally inclusion-monotone and from openness of the condition ($c_2$).
\end{proof}

\section{Proof of the main theorem}
\label{sec:proof-main}

In this section we prove Theorem~\ref{thm:main} using a method of Holmsen~\cite{holmsen2022colorful}. We assume the setting from the statement of the theorem with a slight notational change in line with previous sections: $\F = \{F_0, \dots, F_n\}$ is a family of closed convex sets and $M$ is a matroid on $\{0,\dots, n\}$. As noted in Section~\ref{sec:intro}, we may assume without loss of generality that $\rank(M) = \dim_\R(\FF) \cdot (d-k)(r+1) + 1$, because for $M$ of smaller rank the theorem holds trivially for $\G = \emptyset$.

To argue by contradiction we assume that the conclusion of the theorem does not hold: for any $S \subseteq \{0, \dots, n\}$, if 
\[
    \rank(\{0, \dots, n\} \setminus S) < \rank(M) = \dim_\R(\FF) \cdot (d-k)(r+1) + 1,
\]
then the family of convex sets $\{F_i \in \F \colon i \in S\}$ does not have a $k$-transversal. 

First, let us define a space
\begin{equation*}
    E \coloneqq \{(v,x) \in \stiefel{d-k}{d+1} \times X \colon x \in X(v)\}
\end{equation*}
Here $X$ is the matroidal space \eqref{eq:def-X}, $X(v)$ it the matroidal join \eqref{eq:X(v)}, and by Lemma~\ref{lem:X(v)} we have that $X(v) \subseteq X$ as a matroidal subspace over $M[S(v)]$. We endow $E$ with the diagonal $S(\FF)^{d-k}$-action and let $p_1 \colon E \to \stiefel{d-k}{d+1}$ and $p_2 \colon E \to X$ denote the two $S(\FF)^{d-k}$-equivariant projections. In fact, for the remainder of the proof, we will consider the action of the subgroup $\Z_2^{d-k} = S(\R)^{d-k} \subseteq S(\FF)^{d-k}$, unless specified otherwise.

From the assumption at the beginning of the section, it follows that the set $S(v)$ (defined in \eqref{eq:S(v)}) and consequently the space $X(v)$ are nonempty, for every $v \in \stiefel{d-k}{d+1}$. That is, the first projection $p_1$ is surjective. Now, since $\stiefel{d-k}{d+1}$ is a closed smooth Riemannian manifold, the neighborhood $N_v$ from Lemma~\ref{lem:X(v)} may be chosen to be an $\varepsilon$-ball around $v$, where the constant $\varepsilon>0$ is uniform for all $v$ and the ball is considered with respect to the geodesic metric in the Stiefel manifold. By the work of Illman~\cite{illman1978smooth} the Stiefel manifold $\stiefel{d-k}{d+1}$ may be endowed with the structure of a $\Z_2^{d-k}$-simplicial complex and, due to compactness, we may consider a sufficiently fine subdivision such that the diameter of every simplex is less than $\varepsilon/2$.

Our first claim is that there exists a $\Z_2^{d-k}$-equivariant section
\begin{equation*}
    s \colon \sk_{\rank(M)-1}( \stiefel{d-k}{d+1}) \longrightarrow E
\end{equation*}
of $p_1$ from the $(\rank(M)-1)$-skeleton. Indeed, let us show that $s$ can be constructed inductively on the skeleta using elementary equivariant obstruction theory; for more details, consult a book by tom Dieck \cite[Ch.~II]{tom1987transgroups}. For the base case, a section $s_0 \colon \sk_{0}( \stiefel{d-k}{d+1}) \to E$ may be chosen since the fibers $p_1^{-1}(v) = X(v)$ are nonempty, as observed earlier. Suppose that $0 \le a < \rank(M)-1$ is an integer and that a $\Z_2^{d-k}$-equivariant section
\[
    s_a \colon \sk_{a}( \stiefel{d-k}{d+1}) \longrightarrow E
\]
exists. Let $e$ be any $(a+1)$-face and let $v \in e \setminus \partial e$. The diameter of each simplex in the triangulation is less than $\varepsilon/2$, hence by Lemma~\ref{lem:X(v)} it follows that the section $s_a|_{\partial e}$ factors as 
\[
    s_a|_{\partial e} \colon \partial e \longrightarrow \partial e \times X(v) \subseteq p_1^{-1}(\partial e),
\]
where the arrow is the identity on the ($\partial e$)-factor.
By Lemma~\ref{lem:X(v)} the space $X(v)$ is $(\rank(M[S(v)])-2)$-connected. Moreover, we also have that $\rank(S(v)) = \rank(M)$, as otherwise the family of convex sets $\{F_i \in \F \colon i \notin S(v)\}$ would have a $k$-transversal, contradicting our assumption from the beginning of the section. Now, since $\partial e$ is homeomorphic to an $a$-dimensional sphere and $a \le \rank(M) - 2$, it follows that $s_a|_{\partial e}$ may be extended to a section
\begin{equation*}
    s_{a+1}|_e \colon e \longrightarrow e \times X(v) \subseteq p_1^{-1}(e)
\end{equation*}
Finally, since $\stiefel{d-k}{d+1}$ is a free $\Z_2^{d-k}$-complex and $e$ was an arbitrary $(a+1)$-cell, we have that there exists a $\Z_2^{d-k}$-equivariant section $s_{a+1} \colon \sk_{a+1}(\stiefel{d-k}{d+1}) \to E$ extending $s_a$, which completes the induction.

Our next claim is that there exists a $\Z_2^{d-k}$-equivariant map
\begin{equation*}
    \Phi \colon X \longrightarrow (\FF^{r+1})^{d-k} \setminus \{0\}.
\end{equation*}
In fact, the map we will construct will be $S(\FF)^{d-k}$-equivariant, where the action on the codomain is given by
\begin{equation*}
    (g_1, \dots, g_{d-k}) \cdot (y_1, \dots, y_{d-k})  = (g^{-1}_1 \cdot y_1, \dots, g^{-1}_{d-k} \cdot y_{d-k}),
\end{equation*}
for $(g_1, \dots, g_{d-k}) \in S(\FF)^{d-k}$ and $(y_1, \dots, y_{d-k}) \in (\FF^{r+1})^{d-k}$.
For each vertex $i =0. \dots, n$ of $M$ we define an $S(\FF)^{d-k}$-map
\begin{align*}
    \Phi_i \colon d(i)= \{z \in C_{d-k}(\FF) \colon  \{i\} ~ \text{and} ~ z ~ \text{satisfy} ~ (c_1) \} &\longrightarrow (\FF^{r+1})^{d-k}\\
    z &\longmapsto \Big( z_{j}, z_{j} \phi(F_i) \Big)_{j=1}^{d-k}.
\end{align*}
We recall that $\phi \colon \F \to \FF^r$ is the function from the statement of Theorem~\ref{thm:main} and we point out that the equivariance holds since $S(\FF)^{d-k}$ acts on the domain by inverse-product diagonal action. As explained in \eqref{eq:equiv-map-hocolim}, the maps $\Phi_i$ are glued together to define an $S(\FF)^{d-k}$-equivariant map $\Phi \colon X \to (\FF^{r+1})^{d-k}$. If $0 \in \im(\Phi)$, then there exists a face $\{i_1 < \dots < i_m\} \in P(M)$, real numbers $t_1, \dots, t_\ell > 0$ with $\sum_{\ell = 1}^m t_\ell = 1$, and $(z_\ell)_{\ell = 1}^m \in (C_{d-k}(\FF))^m$ such that $\{i_\ell\}_{\ell = 1}^m$ and $(z_\ell)_{\ell = 1}^m$ satisfy condition $(c_1)$ and
\begin{align*}
    \sum_{\ell = 1}^m \Big(t_\ell z_{j, \ell}, t_\ell z_{j, \ell} \phi(F_{i_\ell}) \Big)_{j=1}^{d-k} = 0 \in (\FF^{r+1})^{d-k}.
\end{align*}
Since $\phi$ is assumed to model $(d-k, M,\FF)$-dependencies of $\F$ (recall Definition~\ref{def:model-M,k-dep}), there exists some real numbers $r_1, \dots, r_m \ge 0$ and points $q_1 \in F_{i_1}, \dots, q_\ell \in F_{i_\ell}$ such that the affine $\FF$-dependencies
\begin{equation*}
    \sum_{\ell=1}^m t_\ell r_\ell z_{j,\ell} = 0 \in \FF,~ \sum_{\ell=1}^m t_\ell r_\ell z_{j,\ell} q_\ell = 0 \in \FF^d, \hspace{3mm} \text{for}~ j=1, \dots, d-k,
\end{equation*}
are not all trivial, i.e., some $t_\ell r_\ell z_{j,\ell} > 0$. However, this contradicts the condition ($c_1$). In conclusion, $0 \notin \im(\Phi)$, as claimed.

We may now complete the proof. There is a composition of $\Z_2^{d-k}$-maps
\begin{equation*}
        \sk_{\rank(M)-1}(\stiefel{d-k}{d+1}) \xrightarrow{~s~}  E \xrightarrow{~p_2~} X \xrightarrow{~\Phi~} (\FF^{r+1})^{d-k} \setminus \{0\} \xrightarrow{~~~} S((\FF^{r+1})^{d-k}),
\end{equation*}
where the last arrow is the equivariant retraction $y \mapsto y/\|y\|$. However, this is in contradiction with Theorem~\ref{thm:equiv-map}, since $\rank(M) - 1 = \dim_\R(\FF) \cdot (d-k)(r+1)$.

\section{Proof of a Dol'nikov-type theorem}
\label{sec:dolnikov-matroid}

In this section we prove Theorem~\ref{thm:dolnikov-matroids} using our main result; the proof is a modification of the argument in the nonmatroidal case~\cite[Lem.~4.1]{mcginnis2026necessary}.

Given a family $\F = \F_1 \cup \dots \cup \F_{r+1}$ of convex sets in $\FF^d$ and a matroid $M$ on $\F$, by Theorem~\ref{thm:main} it is enough to show that there exists a function $\phi \colon \F \to \FF^r$ that models $(d-k,M,\FF)$-dependencies of $\F$ (recall Definition~\ref{def:model-M,k-dep}).
For $\FF$-affinely independent points $p_1, \dots, p_{r+1} \in \FF^r$ we will demonstrate that $\phi$ that maps the $\F_i$ constantly to $p_i$, for each $i=1, \dots, r+1$, satisfies the property.
To this end, let $\I$ be an independent set in $M$ and suppose that
\begin{equation*} 
    \sum_{F \in \I} a_{j,F} = 0 \in \FF,~ \sum_{F \in \I} a_{j,F} \phi(F) = 0 \in \FF^r, \hspace{3mm} \text{for}~ j=1, \dots, d-k,
\end{equation*}
are any $d-k$ affine $\FF$-dependencies which are not all trivial (i.e., some $a_{j,F} \neq 0$). Independence of points $p_1, \dots, p_{r+1}$ yields that
\begin{equation*}
    \sum_{F \in \I \cap \F_i} (a_{1,F}, \dots, a_{d-k,F}) = 0 \in \FF^{d-k}, \hspace{3mm} \text{for}~ i=1, \dots, r+1.
\end{equation*}
Since the $a_{j,F}$ are not all zero, there exists at least one $i_0 \in \{1, \dots, r+1\}$ for which the above sum does not consists exclusively of zero $(d-k)$-vectors. Interpreting this sum as claiming that the origin is in the convex hull of the $(d-k)$-vectors, by Carath\'eodory's theorem~\cite{caratheodory1911variabilitatsbereich}, there exists a subset $J \subseteq \I \cap \F_{i_0}$ of size at most $\dim_\R(\FF^{d-k}) +1 = \dim_\R(\FF)\cdot (d-k) + 1$ and positive real numbers $t_F$, for each $F \in \J$, such that
\begin{equation*}
    \sum_{F \in \J} t_F \cdot (a_{1,F}, \dots, a_{d-k,F}) = 0 \in \FF^{d-k}.
\end{equation*}
Given that $\J$ is an independent set in $M[\F_{i_0}]$, from the assumption of the theorem we conclude that there exists a point $q \in \bigcap \J$. Therefore, we obtain
\begin{equation*} 
    \sum_{F \in \I} r_F a_{j,F} = 0 \in \FF,~ \sum_{F \in \I} r_Fa_{j,F} q_F = 0 \in \FF^d, \hspace{3mm} \text{for}~ j=1, \dots, d-k,
\end{equation*}
are affine $\FF$-dependencies that are not all trivial if we set $r_F \coloneqq t_F$ and $q_F \coloneqq q$, for $F \in \J$, and $r_F\coloneqq 0$ and $q_F \in \F$ any point, otherwise. This completes the proof of the theorem.

\section{Acknowledgements}

I would like to thank Daniel McGinnis for useful discussions and comments, as well as Florian Frick, Andreas Holmsen, and Pablo Sober\'on for their feedback on earlier drafts of the paper.

%\printbibliography

\bibliographystyle{plain}
\bibliography{references.bib}
\end{document}